\theoremstyle{plain} 
\newtheorem{theorem}{\indent\sc Theorem}[section]
\newtheorem{lemma}[theorem]{\indent\sc Lemma}
\newtheorem{proposition}[theorem]{\indent\sc Proposition}
\theoremstyle{definition} 
\newtheorem{definition}{\indent\sc Definition}
\newtheorem{notation}[theorem]{\indent\sc Notation}
\newcommand{\A}{\widetilde{A}}
\newcommand{\B}{\widetilde{B}}
\newcommand{\C}{\mathbb{C}}
\newcommand{\R}{\mathbb{R}}
\newcommand{\Q}{\mathbb{Q}}
\newcommand{\Z}{\mathbb{Z}}
\newcommand{\N}{\mathbb{N}}
\newcommand{\VV}{\mathcal{V}^0}
\def\deg{{\rm{deg}\ }}
\def\ld{\ldots}
\newcommand{\lam}{\lambda}
\newcommand{\mca}{\mathcal}
\def\ld{\ldots}
\newcommand\AND{\quad\textrm{and}\quad}
\newcommand\CC{\mathbb{C}}
\newcommand\NN{\mathbb{N}}
\newcommand\QQ{\mathbb{Q}}
\newcommand\V{\widetilde{V}}
\newtheorem*{remark0}{\indent\sc Remark}
\newcommand{\keywords}{{
  \footnotesize
  \textbf{Keywords}: $S$-unit equation, Pad\'e approximation, Binomial function, Hypergeometric function}}
\title{$S$-unit equation in two variables and Pad\'{e} approximations}
\author{\textsc{Noriko Hirata-Kohno}, \textsc{Makoto Kawashima}, \textsc{Anthony Po\"{e}ls}  and \textsc{Yukiko Washio}}
\date{} 
\begin{document}

\maketitle

\begin{abstract}
In this article, we use Pad\'{e} approximations, constructed in \cite {KP} for binomial functions, to give a new upper bound for the number of the solutions of the $S$-unit equation. Combining explicit formulae of these Pad\'{e} approximants with a simple argument relying on Mahler measure and on the local height, we refine the bound due to J.~-H. Evertse \cite{E}.
\end{abstract}

\keywords
\let\thefootnote\relax\footnote{
Mathematics Subject Classification (2020):  11D45 (primary); 11J68, 11J61, 33C05 (secondary).}

\section{Introduction and main result}

The main purpose of the present article is to use explicit construction of Pad\'{e} approximations, derived from those constructed in \cite{KP} for binomial functions, in order to improve the upper bound for the number of the solutions of the $S$-unit equation in two variables.

\smallskip

Let $K$ be a number field with degree $m:=[K:\Q]<\infty$ and fix non-zero elements $\lam, \mu\in K$. Consider a finite set $S$ of places of $K$, which contains all the archimedean ones. Denote by $s$ its cardinality and by $U_S$ the set of the $S$-units in $K$. It is known that $U_S$ is a finitely generated abelian group of rank $s-1$.

\smallskip

We deal with the $S$-unit equation
\begin{align}\label{unitequation}
    \lam x+\mu y=1 \,\,\text{~in ~unknowns}~ x, y \in U_S.
\end{align}

The finiteness of the number of solutions of Eq.~\eqref{unitequation} dates back to C.~L.~Siegel  \cite{Siegel}, who implicitly reduced the problem to a system consisting of a finite number of Thue equations.

In 1984,  J.~-H. Evertse proved that the equation $\lam x+\mu y=1$ has at most  $3\times 7^{m+2s}$ solutions  \cite[Theorem 1]{E}.
His proof relies on quantitative arguments on Diophantine inequalities of Roth type, with certain Pad\'{e} approximations. Up to now this result has been the best one.

In a more general setting, F.~Beukers and H.~P.~Schlickewei \cite{BeukersSchlickewei} considered a finitely generated subgroup $\Gamma$ of rank $r$ in
$\left(K\setminus\{0\}\right)^2$ to show that the equation $x+y=1$ in  $(x, y) \in \Gamma$ has at most  $256^{r+1}$ solutions. They used  Pad\'{e} approximations for hypergeometric functions, whereas E.~Bombieri and  W.~Gubler
\cite[Chap.~5]{BombieriGubler} followed a different approach in a geometrical framework
(see also \cite{BombieriChoi} \cite{EG2022}).

It is known that the theory of the linear forms in logarithms gives effective bounds for the height of the solutions in the two variables case ({\it confer} \cite{EG2022}). Nevertheless, in this paper, we restrict ourselves to counting the solutions of the $S$-unit equation.

\smallskip

We prove the following statement.

\begin{theorem}\label{main}

Let  $\lam, \mu\in K\setminus\{0\}$. Then the equation  Eq.~\eqref{unitequation}
has at most
\begin{align}\label{theoremmain}
    \left(3.1+5\left(3.4\right)^{m}\right)\cdot  45^s
\end{align}
solutions $(x, y) \in U_S^2$.
\end{theorem}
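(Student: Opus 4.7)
The plan is to follow the established framework for bounding the number of $S$-unit equation solutions: partition the solution set according to a dominant local behavior, derive a Diophantine inequality of Roth type via the Padé approximants to the binomial function $(1-z)^{1/n}$ from \cite{KP}, and combine this with a gap principle to control the ``large'' solutions in each class while counting the ``small'' solutions directly.

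First I would normalize. For each solution $(x,y)\in U_S^2$ of $\lam x+\mu y=1$ and each place $v\in S$, the quantity $|1-\lam x|_v=|\mu y|_v$ is well defined; I choose $v_0\in S$ minimizing it, which splits the solutions into at most $s$ classes. Next, I exploit the finite quotient $U_S/U_S^n$, whose order is bounded by $(\textrm{torsion})\cdot n^{s-1}$: for a fixed coset representative $\omega$, every $S$-unit in that coset has the form $x=\omega\alpha^n$ for some $\alpha\in U_S$. The equation becomes $\lam\omega\alpha^n=1-\mu y$, so that $\alpha$ is a $v_0$-adic approximation of $\zeta(1-\mu y)^{1/n}/(\lam\omega)^{1/n}$ for a suitable root of unity $\zeta$. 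This is precisely the situation in which the explicit Padé approximants $P_n,Q_n$ of $(1-z)^{1/n}$ from \cite{KP} deliver a sharp local estimate of the form
\[
\bigl|P_n(z)-Q_n(z)(1-z)^{1/n}\bigr|_{v_0}\le C(n)\,|z|_{v_0}^{2n+1},
\]
together with explicit upper bounds on the Mahler measures of $P_n$ and $Q_n$.

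Combining this Padé estimate with the product formula gives a gap principle: within a fixed class, if two solutions have heights $H_1\le H_2$ both exceeding an explicit threshold $H_0$, then $H_2\ge H_1^{\kappa}$ for some $\kappa>1$ depending on $n$. This caps the number of ``large'' solutions in each class by an explicit function of $m$ (the $5(3.4)^m$ contribution), while solutions of height at most $H_0$ are counted by a direct covering argument at the archimedean and non-archimedean places of $S$, producing the additive constant $3.1$. Summing over the $45^s$ classes (counting the choice of $v_0$, the coset in $U_S/U_S^n$, and the root of unity $\zeta$) yields the final expression.

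The main obstacle is the tuning of the parameter $n$ and the careful bookkeeping of constants. The value of $n$ must balance two competing effects: larger $n$ improves the quality of the Padé inequality (hence $\kappa$), but it enlarges the set of coset representatives, worsening the class count. Extracting the specific bases $3.4$ in the exponent $m$ and $45$ in the exponent $s$ requires delicately tracking the Mahler measures in the Padé construction of \cite{KP} and estimating local heights place by place; this is where the improvement over Evertse's bound $3\cdot 7^{m+2s}$ must come from, and where the argument is most sensitive to technical slack.
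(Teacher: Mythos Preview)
Your outline misidentifies both the mechanism of the Pad\'e step and the origin of the two numerical constants, so as written it does not yield the theorem.

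First, the Pad\'e setup. Writing $x=\omega\alpha^n$ with $\alpha\in U_S$ gives $\lambda\omega\alpha^n=1-\mu y$ \emph{exactly}; $\alpha$ is then literally an $n$-th root of $(1-\mu y)/(\lambda\omega)$ in $K$, not a $v_0$-adic approximation of one, so there is no Diophantine inequality to which the Pad\'e approximants of $(1-z)^{1/n}$ apply. In the paper (following Evertse) one passes to $L=K(\rho)$ with $\rho^3=1$, sets $\zeta=(\lambda x-\rho\mu y)/(\lambda x-\rho^2\mu y)$, and shows that for each solution some $\rho_V\zeta$ is $V$-adically close to $1$ for $V\in T$ (Lemma~\ref{lem:5}). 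The Pad\'e approximants are for the \emph{fixed} exponent $1/3$ with \emph{variable degree} $n$, and the crucial object is $U_n=\zeta_2\A_n(\zeta_1^3)-\zeta_1\B_n(\zeta_1^3)$, which compares \emph{two} solutions simultaneously via the product formula (Lemma~\ref{lem:8}). Your sketch never produces such a two-solution auxiliary quantity, so the gap principle has nothing to act on.

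Second, the accounting of constants is inverted. In the paper the term $5\cdot(3.4)^m\cdot 45^s$ counts the \emph{small} solutions: it is Evertse's bound $5n^s\big(2(2C)^{3/n}\big)^m$ for solutions with $H(\lambda x/\mu y)<2C$, evaluated at $n=45$, $C=e^{7.8}/2$. The term $3.1\cdot 45^s$ bounds the \emph{large} solutions: it is $3^s(N+k)R(B)^{s-1}$ with $N+k=46$ from Lemmata~\ref{lem:7} and~\ref{lem:8}, and there is no dependence on $m$ here at all. You have these roles reversed, attributing the $(3.4)^m$ to the gap principle and the additive $3.1$ to a covering of small solutions. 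Consequently the parameter you call $n$ (the exponent in $(1-z)^{1/n}$) is not the $n$ that is tuned; what is tuned is the integer $n$ in Step~4 controlling the small-solution count, together with $B$, $r_0$, $k$ in Lemma~\ref{lem:8}.
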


\

More precisely, the number of solutions is bounded from above by
\begin{align}\label{winner}
    \min\Big\{\Big(2.81864 \cdot \left(46.8312\right)^s+5\left(3.22803\right)^{m}\cdot 47^s \Big),
     \,\Big(3.06759\cdot \left(44.9866\right)^s + 5\left(3.36406\right)^{m}\cdot 45^s \Big) \Big\}.
\end{align}

In  \cite[Theorem 1]{E}, Evertse established the following upper bound:
\[
    3\cdot  7^{m}\cdot  49^{s}.
\]
Note that a more precise bound  is shown at the page 583, line -8 of \cite{E}:
\begin{align}\label{evertseprecise}
    \Big(2+5\cdot \left(
    3.26396\right)^{m}\Big)\cdot  49^s \leq \Big(2+5\cdot \left(2e^{24/49}\right)^{m}\Big)\cdot  49^s.
\end{align}
Our numerical bound Eq.~\eqref{winner} is better than Eq.~\eqref{evertseprecise} for any positive integers $m$, $s$.

\smallskip

The structure of our proof is similar to that of Evertse, however, we obtain significant improvement, especially for the estimates coming from the Pad\'{e} approximation part. See Lemma~\ref{lem:8} which replaces \cite[Lemma $8$]{E}. For that purpose, starting with constructions from \cite[\S $9$]{KP}, we first establish new explicit identities for Pad\'{e} approximants related to binomial functions (see Sections \ref{section: Pade} and \ref{section: remainder}). In Section~\ref{section: Mahler}, by using an argument relying on Mahler measure we are able to estimate precisely the coefficients of the polynomials involved, both in the arichimedean and the non-arichimedean cases. This leads us in Section~\ref{section:height} to several new inequalities needed for Lemma~\ref{lem:8} (the main result of Section~\ref{qtt}). In the proof of this lemma, we use the local height as much as possible instead of rough upper bounds, which allows us to refine inequalities in \cite{E}. Section~\ref{section: numerical opt} is devoted to some numerical computation for a certain choice of parameters, which leads us to the final estimate Eq.~\eqref{winner}.

\section{Pad\'{e} approximants for a cubic binomial function}
\label{section: Pade}

Throughout the article, we denote by $\NN$  the set of non-negative rational integers and by $\Z_{\leq 0}$ the set of non-positive rational integers.

Given $\ell\in \N$, we write $(1/z^{\ell})$ the ideal of the ring of Laurent series $\Q[[1/z]]$, generated by $1/z^{\ell}$.

\smallskip

Consider rational parameters $a, b, c\in \Q$ with $c\notin \Z_{\leq 0}$.  Let us recall Gauss' hypergeometric function
${}_2F_{1}$ defined by
\begin{eqnarray*}
{}_2F_{1}\biggl(\begin{matrix} a, \, b\\ c\end{matrix} \biggm|z\biggr)
=\displaystyle\sum_{k=0}^\infty\frac{(a)_k(b)_k}{(c)_k}\,\cdot\frac{z^k}{k!}
\end{eqnarray*}
where $(a)_k=\Gamma(a+k)/\Gamma(a)$ is the $k$-th Pochhammer symbol. 
The radius of convergence is $1$, unless $a \in \Z_{\leq 0}$ or $b \in \Z_{\leq 0}$, 
 in which case the function is a polynomial in $z$. It is known that we can extend the domain of definition of  ${}_{2}F_{1}$ by analytic continuation (\textit{confer} \cite{BeukersBirk} \cite{Erd} for details).

 \smallskip

A cubic binomial function is a special case of Gauss' hypergeometric function. It belongs to the class of functions studied in \cite{KP}, for which we construct explicit Pad\'{e} approximants.

\smallskip

The next result can be deduced directly from Proposition $3.5$ and Corollary $3.6$ of \cite{KP}. Note that it is in general difficult to explicitly construct Pad\'e approximants $(P_1(z),P_2(z))$, whereas the existence is obvious.

\begin{lemma} \label{binom Pade}
Consider the cubic binomial function given by the Laurent series
\[
    f(z)=\dfrac{1}{z}\left(1-\dfrac{1}{z}\right)^{1/3}=\sum_{k=0}^{\infty}\dfrac{(-1/3)_k}{k!}\dfrac{1}{z^{k+1}}\in \big(1/z\big)\cdot \Q[[1/z]].
\]
For each $n$,
define the polynomials $P_{n,0}(z),P_{n,1}(z) \in \mathbb{Q}[z]$ by
\begin{align*}
        &P_{n,0}(z)=\sum_{k=0}^n(-1)^{n-k}\binom{n+k-1}{k}\binom{n-4/3}{n-k}z^k\,, \\
        &P_{n,1}(z)=\sum_{k=0}^{n-1}(-1)^{n-1-k}\binom{n+k}{k}\binom{n+1/3}{n-1-k}z^k\,
        &\kern-30pt (\text {with the convention that~}P_{0,1}(z)=0),
    \end{align*}
and set
\begin{align} \label{Rn}
    R_n(z)=P_{n,0}(z)f(z)-P_{n,1}(z)\,.
\end{align}
Then the pair of polynomials $(P_{n,0}, P_{n,1})$ forms Pad\'{e} approximants, and $R_n(z)$ is a Pad\'{e} approximation of $f(z)$, namely,

\begin{align*}
    R_n(z) \in \left(1/z^{n+1}\right)\cdot\QQ[[1/z]].
\end{align*}

Explicitly, we have
\begin{align} \label{approximation}
    R_n(z)\,= \sum_{k=n}^{\infty} \binom{k}{n}\frac{(-1/3)_k(4/3)_n}{(n+k)!} \dfrac{1}{z^{k+1}}=\dfrac{(4/3)_n(-1/3)_n}{(2n)!z^{n+1}}\cdot {}_{2}F_{1} \biggl(\begin{matrix} n+1,n-1/3 ~\\ 2n+1 \end{matrix} \biggm| \dfrac{1}{z}\biggr)\in (1/z^{n+1}) \,.
\end{align}
\end{lemma}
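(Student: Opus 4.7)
Since the statement is flagged in the paper as an immediate specialization of [KP, Proposition 3.5 and Corollary 3.6], my strategy is to invoke that general construction and then perform the specialization. In [KP, \S 3] one builds, for each non-integer parameter $t$, explicit polynomials approximating $(1-1/z)^t$ with a Pad\'e-type remainder in $(1/z^{n+1})\cdot\Q[[1/z]]$ whose coefficients are given in closed form via products of binomial coefficients in $t$ and Pochhammer symbols. Setting $t=1/3$ and accounting for the extra factor $1/z$ in $f(z)=(1/z)(1-1/z)^{1/3}$ produces the degree-$n$ polynomial $P_{n,0}$ (where the shift explains the appearance of $n-4/3$) and the degree-$(n-1)$ polynomial $P_{n,1}$ (where it gives $n+1/3$). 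This specialization alone establishes the degree bounds and the Pad\'e property $R_n(z)\in(1/z^{n+1})\cdot\Q[[1/z]]$, proving the first assertion of the lemma.

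\textbf{Explicit form of the remainder.} For the closed-form identity for $R_n(z)$, I would compute the Laurent expansion of $P_{n,0}(z)f(z)$ directly. The coefficient of $z^{-\ell-1}$ is the finite hypergeometric sum
\begin{equation*}
\sum_{k=0}^n (-1)^{n-k}\binom{n+k-1}{k}\binom{n-4/3}{n-k}\frac{(-1/3)_{\ell+k}}{(\ell+k)!}\,.
\end{equation*}
For $0\leq\ell<n$ this sum vanishes (this is just the Pad\'e property already established), and for $\ell\geq n$ it should collapse to $\binom{\ell}{n}(-1/3)_\ell(4/3)_n/(n+\ell)!$ through a single ${}_3F_2$ summation identity. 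The contour-integral representation of the Pad\'e approximants underlying [KP, Corollary 3.6] encodes this identity automatically, so I would quote it rather than reprove it. Finally, substituting $\ell=n+j$ and using the standard rewriting $(n+j)!/n!=(n+1)_j$, $(-1/3)_{n+j}=(-1/3)_n(n-1/3)_j$, and $(2n+j)!=(2n)!(2n+1)_j$, the series becomes
\begin{equation*}
R_n(z)=\frac{(4/3)_n(-1/3)_n}{(2n)!\,z^{n+1}}\sum_{j=0}^\infty\frac{(n+1)_j(n-1/3)_j}{(2n+1)_j\,j!}\cdot\frac{1}{z^j},
\end{equation*}
which is the announced ${}_2F_1$ form.

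\textbf{Main obstacle.} The genuine technical point is the hypergeometric summation producing the clean leading coefficient $\binom{\ell}{n}(-1/3)_\ell(4/3)_n/(n+\ell)!$: everything else -- degree counts, the repackaging of Pochhammer symbols, and the reduction to ${}_2F_1$ -- is routine bookkeeping. I expect to handle it by quoting the [KP] framework directly, since the residue/contour-integral representation of Pad\'e approximants built there is precisely designed to produce such identities in closed form. Should the reduction to [KP] prove less transparent than hoped, a self-contained fallback is a direct verification of the displayed sum by a Saalsch\"utzian ${}_3F_2$ identity.
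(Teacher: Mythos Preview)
Your proposal is correct and matches the paper's approach exactly: the paper does not give an independent proof but simply states that the lemma ``can be deduced directly from Proposition~3.5 and Corollary~3.6 of [KP]'', which is precisely what you do. Your additional detail on the specialization $t=1/3$ and the Pochhammer bookkeeping reducing the remainder series to the ${}_2F_1$ form is accurate and goes beyond what the paper itself spells out.
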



Note that our approximants $(P_{n,0}, P_{n,1})$ are the same as those due to M.~A.~Bennett in \cite[Eq. (5) and Eq. (6)]{BennettAus}  (with the parameters $a=-1$ and $\ell=1$), although the formulaes are different.

\smallskip

Next we establish several identities for the above approximants.

\begin{proposition} \label{A,B}
    Define the polynomials  $A_n(1-z)=z^{n-1}P_{n,1}(1/z)$ and $B_n(1-z)=z^nP_{n,0}(1/z)$. Then, we have
    \begin{align*}
        &A_n(1-z)=\sum_{\ell=0}^{n-1}\binom{n+1/3}{\ell}\binom{n-4/3}{n-1-\ell}(1-z)^{\ell}\,,\\
        &B_n(1-z)=\sum_{\ell=0}^n\binom{n-4/3}{\ell}\binom{n+1/3}{n-\ell}(1-z)^{\ell}\,.
    \end{align*}
\end{proposition}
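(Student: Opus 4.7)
The plan is to prove both identities by direct coefficient comparison after changing variables in the defining sums for $P_{n,0}$ and $P_{n,1}$. Since both sides are polynomials in $z$ (or equivalently in $1-z$) of the same degree, it suffices to match coefficients term by term.

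First I would rewrite $B_n(1-z) = z^n P_{n,0}(1/z)$ by substituting $1/z$ into the formula for $P_{n,0}(z)$ and multiplying by $z^n$. Reindexing the resulting sum by $j = n-k$ gives
\[
    z^n P_{n,0}(1/z) = \sum_{j=0}^n (-1)^j \binom{2n-j-1}{n-j}\binom{n-4/3}{j} z^{j}.
\]
On the other hand, expanding $(1-z)^\ell = \sum_{m=0}^\ell \binom{\ell}{m}(-z)^m$ in the proposed formula for $B_n(1-z)$ and interchanging the order of summation yields
\[
    B_n(1-z) = \sum_{m=0}^n (-1)^m z^m \sum_{\ell=m}^n \binom{n-4/3}{\ell}\binom{n+1/3}{n-\ell}\binom{\ell}{m}.
\]
So the identity $B_n(1-z) = z^n P_{n,0}(1/z)$ reduces to showing, for each $j \in \{0,\dots,n\}$,
\[
    \binom{2n-j-1}{n-j}\binom{n-4/3}{j} = \sum_{\ell=j}^n \binom{n-4/3}{\ell}\binom{n+1/3}{n-\ell}\binom{\ell}{j}.
\]

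The key step is then an application of two classical binomial identities. Using the ``subset-of-a-subset'' identity $\binom{n-4/3}{\ell}\binom{\ell}{j} = \binom{n-4/3}{j}\binom{n-4/3-j}{\ell-j}$, we factor $\binom{n-4/3}{j}$ out and substitute $t = \ell - j$ to rewrite the right-hand side as
\[
    \binom{n-4/3}{j}\sum_{t=0}^{n-j}\binom{n-4/3-j}{t}\binom{n+1/3}{n-j-t}.
\]
The Chu--Vandermonde convolution, applied with total degree $(n-4/3-j)+(n+1/3) = 2n-j-1$, collapses the inner sum to $\binom{2n-j-1}{n-j}$, finishing the proof of the formula for $B_n$.

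The formula for $A_n(1-z) = z^{n-1}P_{n,1}(1/z)$ is obtained by the same strategy: reindex with $j = n-1-k$, expand $(1-z)^\ell$, and apply the subset-of-a-subset identity followed by Chu--Vandermonde (now with total degree $2n-1-m$). I do not expect any real obstacle here, only bookkeeping of indices; the whole proof is essentially a verification that the two classical binomial identities combine in exactly the right way. The one place to be careful is the edge case $n=0$ for the $A_n$ identity, where the empty sum convention $P_{0,1}=0$ must be matched with an empty sum on the right-hand side, which it is.
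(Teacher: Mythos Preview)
Your proposal is correct. The overall strategy---coefficient comparison together with the subset-of-a-subset identity and a Vandermonde-type convolution---is the same as the paper's, but the organization differs slightly. The paper expands the defining sums $z^nP_{n,0}(1/z)$ and $z^{n-1}P_{n,1}(1/z)$ directly in powers of $(1-z)$ (writing $(-z)^{n-k}=((1-z)-1)^{n-k}$), and then reduces the resulting inner alternating sums to a bespoke identity (their Lemma~2.3, proven via generating functions). You instead expand the \emph{claimed} right-hand side in powers of $z$ and match against the reindexed polynomial, which reduces the inner sum to the standard Chu--Vandermonde convolution with no alternating signs. Your route is marginally more economical in that it avoids stating and proving the auxiliary lemma; the paper's route has the mild advantage that the expansion in $(1-z)$ is the form ultimately needed in Proposition~2.2, so no back-substitution is required. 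In substance the two arguments are equivalent.
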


The proof of Proposition \ref{A,B} is based on the following simple combinatorial lemma.

\begin{lemma} \label{may be easy}
    Let $\omega\in \CC$.

    $(i)$ Let $\ell$ be a non-negative integer with $n\ge \ell$. Then we have
    \begin{align*}
        \sum_{k=0}^{n-\ell}(-1)^{n-\ell-k}\binom{n+k-1}{k}\binom{n-\omega-\ell-1}{n-k-\ell}=\binom{n+\omega}{n-\ell}\,.
    \end{align*}

    $(ii)$ Let $\ell$ be a non-negative integer with $n-1\ge \ell$. Then we have
    \begin{align*}
        \sum_{k=0}^{n-1-\ell}(-1)^{n-1-k-\ell}\binom{n+k}{k}\binom{n+\omega-\ell}{n-1-k-\ell}=\binom{n-1-\omega}{n-\ell-1}\,.
    \end{align*}

    $(iii)$ We have
    \[
        \sum_{k=0}^n\binom{n-1-\omega}{k}\binom{n+\omega}{n-k}=\binom{2n-1}{n}\,.
    \]
\end{lemma}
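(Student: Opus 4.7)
The three identities are all instances (after sign-shuffling) of the Chu-Vandermonde convolution
\[
    \sum_{k=0}^{n}\binom{x}{k}\binom{y}{n-k}=\binom{x+y}{n},
\]
which holds as a polynomial identity in $x,y\in\CC$. My plan is therefore to reduce each of (i), (ii), (iii) to this identity by applying the negation rule
\[
    \binom{-x}{k}=(-1)^{k}\binom{x+k-1}{k} \qquad (k\in\NN,\ x\in\CC)
\]
to absorb the alternating signs.

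For part (iii), the identity is immediate from Chu-Vandermonde with $x=n-1-\omega$, $y=n+\omega$, since $x+y=2n-1$.

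For part (i), I would first rewrite $\binom{n+k-1}{k}=(-1)^{k}\binom{-n}{k}$. The $(-1)^{k}$ cancels with one factor of the $(-1)^{n-\ell-k}$, pulling a global sign $(-1)^{n-\ell}$ outside the sum. Chu-Vandermonde applied to the remaining sum $\sum_{k=0}^{n-\ell}\binom{-n}{k}\binom{n-\omega-\ell-1}{n-k-\ell}$ yields $\binom{-\omega-\ell-1}{n-\ell}$. A second application of the negation rule gives $\binom{-\omega-\ell-1}{n-\ell}=(-1)^{n-\ell}\binom{n+\omega}{n-\ell}$, and the two factors of $(-1)^{n-\ell}$ cancel, producing the required right-hand side.

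Part (ii) follows by the same device: use $\binom{n+k}{k}=(-1)^{k}\binom{-n-1}{k}$ to absorb the alternating sign, apply Chu-Vandermonde to obtain $\binom{\omega-\ell-1}{n-1-\ell}$ up to a global factor $(-1)^{n-1-\ell}$, and then apply negation once more to convert this into $(-1)^{n-1-\ell}\binom{n-1-\omega}{n-1-\ell}$. The signs again cancel, leaving the stated identity. There is no real obstacle here; the only care required is bookkeeping of signs and index ranges, and checking that Chu-Vandermonde may legitimately be applied even when one of the two upper parameters is a complex number $\omega$ (which is fine, since it is a polynomial identity in the upper parameters).
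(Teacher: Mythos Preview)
Your proof is correct. Each of the three verifications goes through exactly as you describe; the only delicate point is the applicability of Chu--Vandermonde with complex upper parameters, which you address.

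The paper's approach differs somewhat in presentation. For $(iii)$ it does the same thing you do (it simply cites Vandermonde). For $(i)$, however, rather than invoking Chu--Vandermonde as a known identity and using the negation rule $\binom{-x}{k}=(-1)^k\binom{x+k-1}{k}$, the paper expands $(1-z)^{-\omega-\ell-1}$ as a power series in two ways---once directly, and once as the product $(1-z)^{-n}\cdot(1-z)^{n-\omega-\ell-1}$---and compares the coefficients of $z^{n-\ell}$. This is, of course, the standard generating-function proof of Chu--Vandermonde itself, carried out in the specific case needed. For $(ii)$ the paper does not redo the computation but instead observes that the substitution $(n,\ell,\omega)\mapsto(n+1,\ell+2,-\omega-2)$ turns the identity of $(i)$ into that of $(ii)$. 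Your route is more uniform (one mechanism for all three parts) and makes the common underlying identity explicit from the outset; the paper's route is self-contained for $(i)$ and exploits a symmetry to get $(ii)$ for free.
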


\begin{proof}
    We first prove $(i)$. Note that we have
    \begin{align*}
        &(1-z)^{-\omega-\ell-1}=\sum_{\lambda=0}^{\infty}\binom{-\omega-\ell-1}{\lambda}(-z)^{\lambda} = \sum_{\lambda=0}^{\infty}\binom{\lambda+\omega+\ell}{\lambda}z^{\lambda}
    \end{align*}
    and
    \begin{align*}
        (1-z)^{-n}(1-z)^{n-\omega-\ell-1} &=
        \Big(\sum_{\lambda=0}^{\infty} \binom{n+\lambda-1}{\lambda}z^\lambda\Big)
        \Big(\sum_{\lambda=0}^{\infty} \binom{n-\omega-\ell-1}{\lambda}(-z)^\lambda\Big) \\
        &=\sum_{\lambda=0}^{\infty}\left(\sum_{k=0}^{\lambda}(-1)^{\lambda-k}\binom{n+k-1}{k}\binom{n-\omega-\ell-1}{\lambda-k}\right)z^{\lambda}.
    \end{align*}
    Since $(1-z)^{-\omega-\ell-1}=(1-z)^{-n}(1-z)^{n-\omega-\ell-1}$, the coefficients of $z^{n-\ell}$ in the two above formulae must be the same, hence  $(i)$. We deduce $(ii)$ from $(i)$ by replacing $(n,\ell,\omega)$ by $(n+1,\ell+2,-\omega-2)$ and $(iii)$ is just a special case of Vandermonde identity.

\end{proof}

\begin{proof}[Proof of Proposition $\ref{A,B}$]
    By the definition, we have
    \begin{align*}
        A_n(1-z)&=\sum_{k=0}^{n-1}\binom{n+k}{k}\binom{n+1/3}{n-1-k}(1-z-1)^{n-1-k}\\
        &=\sum_{k=0}^{n-1}\binom{n+k}{k}\binom{n+1/3}{n-1-k}\left[\sum_{\ell=0}^{n-1-k}\binom{n-1-k}{\ell}(-1)^{n-1-k-\ell}(1-z)^{\ell}\right]\\
        &=\sum_{\ell=0}^{n-1}\left[\sum_{k=0}^{n-1-\ell}(-1)^{n-1-k-\ell}\binom{n+k}{k}\binom{n+1/3}{n-1-k}\binom{n-1-k}{\ell}\right](1-z)^{\ell}\,.
    \end{align*}

    Combining the above with the identity
    \[
        \binom{n+1/3}{n-1-k}\binom{n-1-k}{\ell} = \binom{n+1/3}{\ell}\binom{n+1/3-\ell}{n-1-k-\ell}
    \]
    and Lemma \ref{may be easy} $(ii)$ (for $\omega=1/3$), we obtain the expected formula for $A_n(1-z)$.
    Similarly,
    \begin{align*}
        B_n(1-z)&=\sum_{k=0}^n\binom{n+k-1}{k}\binom{n-4/3}{n-k}(1-z-1)^{n-k}\\
        &=\sum_{k=0}^n\binom{n+k-1}{k}\binom{n-4/3}{n-k}\left[\sum_{\ell=0}^{n-k}\binom{n-k}{\ell}(-1)^{n-k-\ell}(1-z)^{\ell}\right]\\
        &=\sum_{\ell=0}^n\left[\sum_{k=0}^{n-\ell}(-1)^{n-k-\ell}\binom{n+k-1}{k}\binom{n-4/3}{n-k}\binom{n-k}{\ell}\right](1-z)^{\ell}\,,
    \end{align*}
    and combining the above with the identity
    \[
        \binom{n-4/3}{n-k}\binom{n-k}{\ell} = \binom{n-4/3}{n-k}\binom{n-4/3-\ell}{n-k-\ell}
    \]
    and Lemma \ref{may be easy} $(i)$ (for $\omega = 3$), we get the formula for $B_n(1-z)$. This completes the proof of Proposition~\ref{A,B}.
\end{proof}

\section{Remainder function}
\label{section: remainder}

\begin{notation}\label{Vn}

Let $A_n(1-z),B_n(1-z)$ be the polynomials defined as in Proposition \ref{A,B}.
Define the polynomial $V_n(T)$ by
\[
    V_n(T):=A_n(T^3)-TB_n(T^3)\in \Q[T].
\]
\end{notation}

Below we show that the polynomials  $A_n(T)$, $B_n(T)$ and $V_n(T)$ have the following properties, thanks to the fact they are related to Pad\'{e} approximation.

\begin{lemma} \label{ABV}

    $(i)$ For each $n \geq 0$, there exists $W_n(T)\in \Q[T]$ such that
    \[
        V_n(T) = (1-T)^{2n}W_n(T).
    \]

    $(ii)$ For any $\beta\in \C\setminus \{1\}$, we have
    \[
        A_n(\beta)B_{n+1}(\beta)-A_{n+1}(\beta)B_n(\beta)\neq 0\,.
    \]

\end{lemma}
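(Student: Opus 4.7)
The two statements both follow by translating the Padé approximation identity Eq.~\eqref{Rn} and Eq.~\eqref{approximation} through the change of variables
\[
    w = \frac{1}{1-T^3}, \qquad T^3 = 1 - \frac{1}{w}, \qquad \beta = \frac{w-1}{w},
\]
which relates the Padé data $(P_{n,0}, P_{n,1}, R_n, f)$ in the variable $w$ to the polynomials $(A_n, B_n, V_n)$ in the variable $T$ (resp.~$\beta$). The plan is first to derive a clean closed formula linking $V_n(T)$ and $R_n(w)$, then to extract (i) from the order of vanishing of $R_n$ and (ii) from a Casoratian computation.

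\textbf{Part (i).} Using the definitions $P_{n,0}(w) = w^n B_n((w-1)/w)$ and $P_{n,1}(w) = w^{n-1} A_n((w-1)/w)$ (obtained from Proposition~\ref{A,B} by setting $z = 1/w$) together with $f(w) = T/w$, a direct substitution into the Padé identity $R_n(w) = P_{n,0}(w) f(w) - P_{n,1}(w)$ gives
\[
    V_n(T) \;=\; A_n(T^3) - T B_n(T^3) \;=\; -(1-T^3)^{n-1}\, R_n\!\left(\frac{1}{1-T^3}\right),
\]
valid as an identity of analytic functions for $T$ near $1$ (where the hypergeometric series in Eq.~\eqref{approximation} converges). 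Since $R_n(w) \in (1/z^{n+1})\cdot \QQ[[1/z]]$, the right-hand side vanishes to order at least $(n-1)+(n+1) = 2n$ in the variable $1-T^3$ at $T=1$; because $(1-T^3) = (1-T)(1+T+T^2)$ and $(1+T+T^2)|_{T=1} = 3 \neq 0$, this means $V_n$ vanishes to order at least $2n$ at $T=1$. As $V_n \in \Q[T]$ is a polynomial, we conclude $(1-T)^{2n} \mid V_n(T)$ in $\Q[T]$, giving the desired $W_n$.

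\textbf{Part (ii).} Consider the Casoratian of the Padé approximants,
\[
    D(w) \;:=\; P_{n,0}(w) P_{n+1,1}(w) - P_{n+1,0}(w) P_{n,1}(w).
\]
Using $P_{k,1}(w) = P_{k,0}(w) f(w) - R_k(w)$ to eliminate $f$ yields
\[
    D(w) \;=\; P_{n+1,0}(w) R_n(w) - P_{n,0}(w) R_{n+1}(w).
\]
From the degrees of $P_{k,0}$ (namely $k$) and the leading order of $R_k$ (namely $1/w^{k+1}$), the first term on the right is a power series in $1/w$ with constant term
\[
    \binom{2n+1}{n+1}\cdot\frac{(-1/3)_n(4/3)_n}{(2n)!} \;\neq\; 0,
\]
while the second starts at order $1/w^2$. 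Thus $D(w)$ is a Laurent series with only non-positive powers of $w$; being simultaneously a polynomial in $w$, it must equal that nonzero constant $c_n$. Translating back via $P_{n,0}(w) = w^n B_n(\beta)$ and $P_{n,1}(w) = w^{n-1} A_n(\beta)$ with $\beta = (w-1)/w$, we get
\[
    w^{2n}\bigl(A_{n+1}(\beta) B_n(\beta) - A_n(\beta) B_{n+1}(\beta)\bigr) \;=\; c_n,
\]
and since $w = 1/(1-\beta)$ for $\beta \neq 1$, this rearranges to $A_n(\beta) B_{n+1}(\beta) - A_{n+1}(\beta) B_n(\beta) = -c_n(1-\beta)^{2n} \neq 0$, proving (ii) and incidentally giving an explicit formula.

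\textbf{Main obstacle.} The only delicate point is keeping the book-keeping of substitutions, signs, and the various cube roots clean, and justifying the analytic identity for $V_n$ near $T=1$ (where one must ensure $|1-T^3|<1$ so that the ${}_2F_1$ series in Eq.~\eqref{approximation} converges). Everything else is a direct degree and leading-order count once the Padé identity has been transported to the $(T,\beta)$-side.
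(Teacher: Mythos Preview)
Your proposal is correct. Both parts rest on exactly the same underlying identities as the paper, but the presentation differs in two respects worth noting.

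For part (i), the paper argues purely formally: it introduces $S_n(z)=-z^{n-1}R_n(1/z)\in (z^{2n})\subset\Q[[z]]$, substitutes the polynomial $z=3T-3T^2+T^3=1-(1-T)^3$ (which is divisible by $T$), and observes that the resulting formal series---which equals the polynomial $V_n(1-T)$---must lie in $(T^{2n})$. Your route is the analytic counterpart: you work near $T=1$ with the convergent series for $R_n$ and read off the order of vanishing from $(1-T^3)^{2n}$. After the change of variable $T\leftrightarrow 1-T$ these are literally the same computation; the paper's $T$-adic argument simply avoids the (minor) care you flag about branches of the cube root and the region of convergence.

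For part (ii), the paper quotes an external result (\cite[Lemma~3.7]{KP}) to evaluate the Casoratian $P_{n,0}P_{n+1,1}-P_{n+1,0}P_{n,1}$ as an explicit nonzero multiple of $z^{2n}$. Your argument is more self-contained: you rewrite the Casoratian as $P_{n+1,0}R_n-P_{n,0}R_{n+1}$, observe by a degree/leading-order count that this Laurent series in $1/w$ has no positive-power terms and nonzero constant term, and conclude it is a nonzero constant since it is also a polynomial. This is the standard ``Padé Casoratian'' trick and yields the same explicit constant $\binom{2n+1}{n+1}\,(-1/3)_n(4/3)_n/(2n)!$ as the paper (note $(n+1)_{n+1}/(n+1)!=\binom{2n+1}{n+1}$). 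The advantage of your version is that it does not rely on the companion paper; the paper's citation is shorter but less transparent.
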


\begin{proof}
    Let $R_n(z)$ be the series defined in Eq.~\eqref{Rn}. Since  $R_n(z)\in (1/z^{n+1})$ (see Eq.~\eqref{approximation}),
    we have
    \begin{align*}
        S_n(z):=-z^{n-1}R_n(1/z) &= -z^{n-1}[P_{n,0}(1/z)f(1/z)-P_{n,1}(1/z)] \\
        & =A_n(1-z)-(1-z)^{1/3}B_n(1-z)\in (z^{2n})\,.
    \end{align*}
    Set $g(z)=(1-z)^{1/3}$.
       Noting that $3T-3T^2+T^3 = 1-(1-T)^3$, the formal series $g(3T-3T^2+T^3)$ converges $T$-adically to $1-T$, from which we deduce the formal identity (in $\QQ[[T]]$)
    \begin{align*}
        S_n(3T-3T^2+T^3) = A_n\big((1-T)^3\big)-(1-T)B_n\big((1-T)^3\big) = V_n(1-T). 
    \end{align*}
    Since $3T-3T^2+T^3$ is divisible by $T$ and $S_n(z)\in (z)^{2n}$, we deduce that $T^{2n}$ divides the polynomial $V_n(1-T)$. This completes the proof of $(i)$.

    \smallskip

    By definition of $A_n$ and $B_n$, we have
    \begin{align*}
        A_n(1-z)B_{n+1}(1-z)-A_{n+1}(1-z)B_n(1-z)&=z^{2n}\left(P_{n,0}(1/z)P_{n+1,1}(1/z)-P_{n+1,0}(1/z)P_{n,1}(1/z)\right)\\
                                                              &=\dfrac{(n+1)_{n+1}(-1/3)_n(4/3)_n}{(n+1)!(2n)!}\cdot z^{2n}\,,
    \end{align*}
    the last identity coming from \cite[Lemma $3.7$]{KP} with $(\alpha,\gamma,\delta)=(1,-1,4/3)$. Assertion $(ii)$ follows by evaluating at $z=1-\beta$.

\end{proof}

\section{Mahler measure and estimates in the archimedean case}\label{section: Mahler}
In this section, denote by $|\cdot|$ the usual absolute value in $\CC$.
For a polynomial $P(z)\in \CC[z]$, recall the definition of \textsl{Mahler measure} $M(P)$ of $P$ given by
\[
    M(P)= \exp\Big( \int_{0}^{1} \log\big(|P(e^{2i\pi\theta}| \big)\textrm{d}\theta \Big).
\]
The \textsl{length} $L(P)$ of $P$ is the sum of the moduli of its coefficients. It is known (see for example \cite[\S $1$ ex.\,$1.2$]{wald}) that
\begin{align}
    \label{eq:formule Mahler measure and length of P}
     M(P) \leq L(P) \leq 2^{\deg P} M(P).
\end{align}
Note that for any $z\in\CC$ we have
\begin{align}
    \label{eq:formule |P(z)| and L(P)}
     |P(z)| \leq L(P)\cdot \max(1,|z|)^{\deg P}.
\end{align}

\

\begin{lemma}
    \label{lem: estimates An and Bn}
    Let $z\in\CC$. For each $n\geq 1$ we have
    \begin{align}
    \label{eq: first estimate An, Bn}
        \max\big(|A_{n}(z)|, |B_{n}(z)|  \big) \leq 4^n \max(1,|z|)^{n}\,,
            \end{align}
    and for each $n\geq 2$ we have
    \begin{align}
    \label{eq: LAn, LBn}
         L(A_{n}(z)) +  L(B_{n}(z)) \leq 4^{n}/2\,.
    \end{align}
\end{lemma}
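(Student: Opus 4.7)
My plan is to compute $L(A_n)$ and $L(B_n)$ exactly using the formulae from Proposition \ref{A,B}, and then derive both estimates via the standard bound \eqref{eq:formule |P(z)| and L(P)}. The crux is a sign analysis of the coefficients. Writing $A_n(w)=\sum_{\ell=0}^{n-1} a_\ell w^\ell$ and $B_n(w)=\sum_{\ell=0}^n b_\ell w^\ell$ with $a_\ell=\binom{n+1/3}{\ell}\binom{n-4/3}{n-1-\ell}$ and $b_\ell=\binom{n-4/3}{\ell}\binom{n+1/3}{n-\ell}$, I will observe that every $a_\ell$ is nonnegative, while every $b_\ell$ is nonnegative except $b_n=\binom{n-4/3}{n}<0$. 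This follows from inspecting the factors of the two types of binomials: those in $\binom{n+1/3}{\cdot}$ are all positive, and those in $\binom{n-4/3}{k}=(n-4/3)(n-7/3)\cdots(n-4/3-k+1)/k!$ are positive exactly for $k\le n-1$, with the last factor $-1/3$ appearing only at $k=n$.

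Once the signs are identified, Vandermonde's identity (essentially Lemma \ref{may be easy}(iii)) evaluated at $w=1$ gives $A_n(1)=\binom{2n-1}{n-1}$ and $B_n(1)=\binom{2n-1}{n}$, from which
\[
    L(A_n)=\binom{2n-1}{n-1}, \qquad L(B_n)=\binom{2n-1}{n}+2\left|\binom{n-4/3}{n}\right|.
\]
The main quantitative step, which I expect to be the hardest part, is to bound the correction $|\binom{n-4/3}{n}|$ arising from the single negative coefficient of $B_n$: if it grew too fast, it would destroy the $4^n/2$ bound. I plan to combine the recursion $|\binom{n-4/3}{n}|=\binom{n-4/3}{n-1}/(3n)$ with the termwise comparison
\[
    \binom{n-4/3}{n-1}=\prod_{k=0}^{n-2}\frac{n-4/3-k}{k+1}<\prod_{k=0}^{n-2}\frac{n-1-k}{k+1}=1,
\]
which yields $|\binom{n-4/3}{n}|<1/(3n)\le 1/6$ for $n\ge 2$.

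For the final assembly, the sequence $\binom{2n}{n}/4^n$ is decreasing (since the successive ratio is $(2n+1)/(2n+2)<1$) and equals $3/8$ at $n=2$, so $\binom{2n}{n}\le (3/8)\cdot 4^n$ for $n\ge 2$. Summing the two lengths and applying Pascal's rule,
\[
    L(A_n)+L(B_n)=\binom{2n}{n}+2\left|\binom{n-4/3}{n}\right|\le (3/8)\cdot 4^n+1/3\le 4^n/2,
\]
which is \eqref{eq: LAn, LBn}. For \eqref{eq: first estimate An, Bn}, the bounds $L(A_n),L(B_n)\le 4^n$ (immediate from the computations above for all $n\ge 1$) combined with \eqref{eq:formule |P(z)| and L(P)} and $\max(1,|z|)^{\deg A_n}\le\max(1,|z|)^n$ give $|A_n(z)|,|B_n(z)|\le 4^n\max(1,|z|)^n$, as required.
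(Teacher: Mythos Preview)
Your proof is correct and follows essentially the same approach as the paper: both identify the sign pattern of the coefficients (all $a_\ell\ge 0$, and all $b_\ell\ge 0$ except $b_n<0$), evaluate $A_n(1)=\binom{2n-1}{n-1}$ and $B_n(1)=\binom{2n-1}{n}$, and derive \eqref{eq: first estimate An, Bn} from \eqref{eq:formule |P(z)| and L(P)}. The only cosmetic difference is that the paper bounds $L(A_n)$ and $L(B_n)$ individually by $4^{n-1}$ for $n\ge 2$ and then sums, whereas you sum first (using Pascal to get $\binom{2n}{n}$) and then bound; you also supply the explicit estimate $|\binom{n-4/3}{n}|<1/(3n)$ for the single negative coefficient, which the paper leaves implicit.
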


\begin{proof}
    All the coefficients of the polynomial $A_{n}(z)$ are positive and that $A_{n}(1)$, the constant coefficient of $A_n(1-z)$, corresponds to the leading coefficient of $P_{n,1}(z)$, hence
    \[
        L(A_{n}(z)) =A_{n}(1) = \binom{2n-1}{n-1} \leq 4^{n-1} \quad (n\geq 1).
    \]
    Similarly, since all the coefficients but  the leading one of the polynomial $B_{n}(z)$ are positive, and observing that
    the constant coefficient of $B_{n}(1-z)$ equals to the leading one of $P_{n,0}(z)$, we obtain
    \begin{align}
    \label{eq proof: estimate L(B_n)}
         L(B_{n}(z)) = B_{n}(1) -2\binom{n-4/3}{n} =  \binom{2n-1}{n} -2\binom{n-4/3}{n} \leq 4^n \quad (n\geq 1).
    \end{align}
    We get Eq.~\eqref{eq: first estimate An, Bn} by using Eq.~\eqref{eq:formule |P(z)| and L(P)} with the above estimates.
    Eq.~\eqref{eq: LAn, LBn} follows from the above and by noting that in Eq.~\eqref{eq proof: estimate L(B_n)}, we can replace the upper bound $4^n$ by $4^{n-1}$ for each $n\geq 2$.
\end{proof}
\begin{lemma}\label{advantage}
Let $z\in \C$. Recall that $V_n(z)=(1-z)^{2n}\cdot W_n(z)$. We have for each $n\geq 1$,
    \begin{align}\label{eq: first estimate Vn}
       |W_n(z)| \leq 8^n \max(1,|z|)^{n+1}.
    \end{align}
\end{lemma}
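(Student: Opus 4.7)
The plan is to exploit the multiplicativity of the Mahler measure together with the factorization $V_n(T) = (1-T)^{2n} W_n(T)$ provided by Lemma~\ref{ABV}$(i)$ to transfer bounds from $V_n$ to $W_n$. Since the only root of $1-T$ lies on the unit circle, Jensen's formula gives $M((1-T)^{2n}) = 1$, so that $M(W_n) = M(V_n)$. The overall strategy is then to bound $L(V_n)$, which dominates $M(V_n)$ by Eq.~\eqref{eq:formule Mahler measure and length of P}, and to recover $L(W_n)$ (hence $|W_n(z)|$) by reversing the same inequality and applying Eq.~\eqref{eq:formule |P(z)| and L(P)}.

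For the length of $V_n$, I would observe that the monomials appearing in $A_n(T^3)$ all have degree divisible by $3$, while those in $TB_n(T^3)$ all have degree congruent to $1$ modulo $3$. Consequently the subtraction $V_n(T) = A_n(T^3) - TB_n(T^3)$ produces no cancellation between the two blocks, and one has exactly $L(V_n) = L(A_n) + L(B_n)$. Applying Eq.~\eqref{eq: LAn, LBn} then yields $L(V_n) \leq 4^n/2$ for every $n \geq 2$. Since $A_n$ has degree $n-1$ and $B_n$ has degree $n$ with nonvanishing leading coefficient $\binom{n-4/3}{n}$, the polynomial $V_n$ has degree $3n+1$, and hence $\deg W_n = n+1$. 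Putting everything together,
\[
    L(W_n) \leq 2^{\deg W_n} M(W_n) = 2^{n+1} M(V_n) \leq 2^{n+1} \cdot \frac{4^n}{2} = 8^n,
\]
so Eq.~\eqref{eq:formule |P(z)| and L(P)} immediately gives $|W_n(z)| \leq 8^n \max(1,|z|)^{n+1}$ for all $n \geq 2$.

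The main obstacle is that Eq.~\eqref{eq: LAn, LBn} is proved only for $n \geq 2$, so the case $n=1$ must be treated separately. I would do this by a direct computation based on Proposition~\ref{A,B}: one finds $A_1(T) = 1$ and $B_1(T) = 4/3 - T/3$, whence $V_1(T) = 1 - \tfrac{4}{3}T + \tfrac{1}{3}T^4$, and division by $(1-T)^2$ yields $W_1(T) = 1 + \tfrac{2}{3}T + \tfrac{1}{3}T^2$. Since $L(W_1) = 2 \leq 8$, the inequality $|W_1(z)| \leq 8\max(1,|z|)^2$ follows from Eq.~\eqref{eq:formule |P(z)| and L(P)}. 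The other subtle point worth highlighting is that the factor $1/2$ saved in Eq.~\eqref{eq: LAn, LBn} is exactly what absorbs the factor $2$ coming from the length-vs-Mahler comparison $L(W_n) \leq 2^{\deg W_n} M(W_n)$, producing the clean constant $8^n$ rather than $2\cdot 8^n$; losing this refinement would propagate into the quantitative estimates of Lemma~\ref{lem:8}.
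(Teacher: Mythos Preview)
Your proof is correct and follows essentially the same route as the paper: both use the multiplicativity of the Mahler measure (so that $M(W_n)=M(V_n)$ since the extra factor $(1-T)^{2n}$ has Mahler measure $1$), bound $L(V_n)=L(A_n)+L(B_n)\leq 4^n/2$ via Eq.~\eqref{eq: LAn, LBn}, recover $L(W_n)\leq 8^n$ from $\deg W_n=n+1$, and treat $n=1$ by direct computation. Your write-up is in fact slightly more explicit than the paper's (you justify $M((1-T)^{2n})=1$ and spell out why no cancellation occurs in $L(V_n)$), and your $W_1(T)=1+\tfrac{2}{3}T+\tfrac{1}{3}T^2$ is correct---the paper's formula $W_1(z)=-(z^2+2z+3)/3$ carries an extraneous minus sign, though this does not affect the bound.
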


\begin{proof}
Since $W_n(T)\in\Q[T]$ has degree $n+1$, Eq.~\eqref{eq:formule Mahler measure and length of P} implies that
  \begin{align*}
    2^{-n-1}L(W_n) \leq M(W_n) =  M(V_n) \leq L(V_n)  =L(A_{n}(T^3)) +  L(B_{n}(T^3)) \leq 2^{-1}\cdot 4^{n},
  \end{align*}
  for each $n\geq 2$, 
  hence $L(W_n)\leq 4\cdot 8^n$. The lemma follows by Eq.~\eqref{eq:formule |P(z)| and L(P)}.
  We have $W_1(z)=-(z^2+2z+3)/3$, thus Eq.~\eqref{eq: first estimate Vn} still holds in the case of $n=1$.
\end{proof}

\begin{remark0}
    Eq.~\eqref{approximation} gives an explicit expression of $R_n(z)$ that corresponds to the \textsl{degenerate} case of Gauss' hypergeometric function. It is possible to deduce asymptotic estimates of the remainder function $R_n(z)$ when $n$ is large enough (\textit{confer} \cite{Erd} \cite{Luke} \cite{Watson}). However, we will not use is in our proof.

    \smallskip

    Also note that we could slightly improve our inequalities (and thus Theorem~\ref{main}) by estimating the common denominators of the coefficients of approximants as Bennett does in \cite[Lemma 3.2]{BennettAus} (the proof is based on Dirichlet's prime theorem in arithmetic progressions). For simplicity, we will just combine the above argument relying on Mahler measure with properties of the local height in Section~\ref{qtt}. It will be sufficient to get our refinement.
\end{remark0}

\section{Heights and estimates in the archimedean case}\label{section:height}

Recall that $K$ is a number field of degree $m$. We denote the set of places of $K$  by ${{\mathfrak{M}}}_K$ (respectively by ${\mathfrak{M}}^{\infty}_K$ for archimedean places, and by ${{\mathfrak{M}}}^{f}_K$
for finite places).

\begin{definition}
    Given a place $v$ of $K$, we define the normalized absolute value $| \cdot |_v$ by
 \begin{align*}
&|p|_v=p^{-\tfrac{[K_v:\Q_p]}{[K:\Q]}} \ \text{if}  \ v\in{{\mathfrak{M}}}^{f}_K \ \text{and} \ v|p\enspace, \\
&|x|_v=|\sigma_v(x)|^{\tfrac{[K_v:\R]}{[K:\Q]}} \ \text{if} \ v\in {{\mathfrak{M}}}^{\infty}_K\enspace,
 \end{align*}
    where $p$ is a rational prime number in the first case and $\sigma_v$ is the embedding $K\hookrightarrow \C$ corresponding to $v$ in the second case. Following the notation of \cite{E}, for each $v\in {\mathfrak{M}_K}$ and each $\beta\in K$, we define the \textit{exponential local height} of  $\beta$ by
    \begin{align*}
         &\mathrm{H}_v({\beta})=\max\{ 1,|\beta|_v\}\,,
    \end{align*}
    and its \textit{exponential  absolute height} by
    \begin{align*}
        &\mathrm{H}({\beta})=\prod_{v\in {{\mathfrak{M}}}_K}\mathrm{H}_v({\beta})\,.
    \end{align*}
    We also set
    \begin{align*}
        s(v):=\left\{
        \begin{array}{cl}
        1/m & \ \text{if} \ v \ \text{~archimedean~real\,,} \\
        2/m & \ \text{if} \ v \ \text{~archimedean~complex\,,} \\
        0     & \ \text{if} \ v \ \text{~non-archimedean\,.}
        \end{array}
        \right.
    \end{align*}
    Given $v\in \mathfrak{M}_K$ and $\beta_1, \ldots, \beta_r\in K$, we have
    \begin{align}\label{eq:30}
        \left|\beta_1+\cdots+\beta_r\right|_v\leq r^{s(v)}\max\left(|\beta_1|_v,\ld,|\beta_r|_v\right)\,.
    \end{align}

\end{definition}

\noindent
When $v$ is archimedean, we deduce from Lemmata \ref{lem: estimates An and Bn} and \ref{advantage} the following property.
\begin{proposition}\label{archim}
    Let $v$ be an archimedean place and $\alpha\in K$. For each $n\geq 1$ we have
       \begin{align*} 
        \max\big(|A_{n}(\alpha^3)|_v, |B_{n}(\alpha^3)|_v  \big) \leq 4^{ns(v)}\mathrm{H}_v(\alpha)^{3n} \AND
        |V_n(\alpha)|_v \leq |1-\alpha|_v^{2n}\cdot 8^{ns(v)}\mathrm{H}_v(\alpha)^{n+1}\,.\\
   \end{align*}
\end{proposition}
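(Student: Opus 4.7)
The plan is to translate the complex-analytic inequalities of Lemmata~\ref{lem: estimates An and Bn} and \ref{advantage} into the normalized archimedean absolute value, by first evaluating inside $\CC$ via the embedding $\sigma_v$ and then raising to the power $s(v)=[K_v:\R]/m$. The key preliminary observation is that for an archimedean $v$ and any $\beta\in K$, the definition directly yields $|\beta|_v=|\sigma_v(\beta)|^{s(v)}$, hence $\mathrm{H}_v(\beta)=\max(1,|\sigma_v(\beta)|)^{s(v)}$. Since $A_n,B_n,W_n\in\Q[T]$, we also have $\sigma_v(A_n(\alpha^3))=A_n(\sigma_v(\alpha)^3)$, and similarly for $B_n$ and $W_n$.

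For the first inequality, I would apply Lemma~\ref{lem: estimates An and Bn} with $z=\sigma_v(\alpha)^3\in\CC$, which gives
\[
    \max\bigl(|A_n(\sigma_v(\alpha)^3)|,\,|B_n(\sigma_v(\alpha)^3)|\bigr)\leq 4^n\max(1,|\sigma_v(\alpha)^3|)^n=4^n\max(1,|\sigma_v(\alpha)|)^{3n}.
\]
Raising both sides to the power $s(v)>0$ preserves the inequality and yields the desired bound
\[
    \max\bigl(|A_n(\alpha^3)|_v,|B_n(\alpha^3)|_v\bigr)\leq 4^{ns(v)}\max(1,|\sigma_v(\alpha)|)^{3ns(v)}=4^{ns(v)}\mathrm{H}_v(\alpha)^{3n}.
\]

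For the second inequality, I would invoke the factorization $V_n(\alpha)=(1-\alpha)^{2n}W_n(\alpha)$ established in Lemma~\ref{ABV}$(i)$ and use multiplicativity of $|\cdot|_v$ to write $|V_n(\alpha)|_v=|1-\alpha|_v^{2n}\,|W_n(\alpha)|_v$. Applying Lemma~\ref{advantage} to $\sigma_v(\alpha)\in\CC$ gives $|W_n(\sigma_v(\alpha))|\leq 8^n\max(1,|\sigma_v(\alpha)|)^{n+1}$, and raising to the $s(v)$-th power as before yields $|W_n(\alpha)|_v\leq 8^{ns(v)}\mathrm{H}_v(\alpha)^{n+1}$. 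Combining the two factors produces the claimed upper bound.

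There is no real obstacle here; the proof is essentially a bookkeeping exercise. The only thing one must be careful about is the exponent $s(v)$ coming from the normalization convention in Section~\ref{section:height}, and the fact that taking $s(v)$-th powers (which is a genuine nontrivial exponentiation only when $v$ is complex, since then $s(v)=2/m$) must be applied uniformly to every factor of the form $\max(1,|\sigma_v(\alpha)|)$, so that the result reassembles into a power of $\mathrm{H}_v(\alpha)$.
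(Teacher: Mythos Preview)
Your proof is correct and matches the paper's approach: the paper simply states that the proposition is deduced from Lemmata~\ref{lem: estimates An and Bn} and \ref{advantage}, and what you have written is exactly the straightforward unpacking of that deduction via $|\beta|_v=|\sigma_v(\beta)|^{s(v)}$.
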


\section{Quantitative argument}\label{qtt}

In this section, we closely follow the argument in \cite{E}. We collect the main properties we need and which are already proven in \cite{E}. We first introduce some notation used by Evertse.

Fix $\rho$ a primitive cubic root of the unity and set $L=K(\rho)$. Given $x,\ y\in U_s$ solutions of $\lam x+\mu y=1$, we put
\begin{align*}
\begin{aligned}
    \xi=\xi(x,\ y)=\lam x-\rho \mu y, \quad \eta=\eta(x,\ y)=\lam x-\rho^2\mu y, \quad \textrm{and} \quad
    \zeta=\zeta(x,\ y)=\frac{\xi(x,\ y)}{\eta(x,\ y)}.
\end{aligned}
\end{align*}

We denote by $\mathbf{1}_L$ the set of the root of the unity in $L$ and define
\begin{align}\label{thezeta}
    \mathcal{V}^0=\Big\{\zeta(x,y)\in L ~\Bigm\vert~ x, y \in U_S \quad \textrm{with} \quad \lam x+\mu y=1\AND \dfrac{\lam x}{\mu y}\notin \mathbf{1}_L\Big\}.
\end{align}
Let $T\subset \mathfrak{M}_L$ be the set of places of $L$ consisting of those lying above the places in $S$.  Put
\begin{align*}
    A=\Big(\prod_{V\in T}|3|_V\Big)^\frac{1}{2}\prod_{V\in T}|\lam\mu|_V\Big(\prod_{V\not\in T}\max(|\lam|_V,\ |\mu|_V)\Big)^3.
\end{align*}
We may suppose that $A\geq 1$ by simple properties of absolute values \cite[page 571]{E}.

\smallskip

The next two results correspond respectively to \cite[Lemma $5$]{E} and \cite[Lemma $7$]{E}.

\begin{lemma}\label{lem:5}
    Let $B$ be a real number with $\dfrac{1}{2}\leq B<1$ and set $R(B)=(1-B)^{-1}B^{\frac{B}{B-1}}$. Then, there is a set $\mca W_0$ of cardinality at most $3^sR(B)^{s-1}$, consisting of tuples $\big((\rho_V)_{V\in T}, (\Gamma_V)_{V\in T}\big)$, with  $\sum_{V\in T} \Gamma_V=B$ and such that $\rho_V^3=1$ and  $\Gamma_V\geq 0$ for each $V\in T$, which satisfies the following property. For each $\zeta\in\mca V^0$, there exists $\big((\rho_V)_{V\in T}, (\Gamma_V)_{V\in T}\big)\in \mca W^0$ with
    \begin{align}\label{eq:49}
        \min(1,\ |1-\rho_V\zeta|_V)\leq\left(\dfrac{8A}{H(\zeta)^{3}}\right)^{\Gamma_V} \quad \textrm{for } V\in T.
    \end{align}
\end{lemma}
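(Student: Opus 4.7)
The plan is to follow Evertse's two-phase approach: first produce, for each $\zeta\in\mca V^0$, real exponents $\widetilde\Gamma_V\geq 0$ satisfying the pointwise inequality \eqref{eq:49} (with $\Gamma_V$ replaced by $\widetilde\Gamma_V$) and such that $\sum_{V\in T}\widetilde\Gamma_V\geq 1$; then discretize these exponents to a finite list of admissible tuples with $\sum_V\Gamma_V=B<1$, exploiting the slack between $1$ and $B$ to absorb the approximation error.

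\textbf{Phase 1 (global inequality).} We may assume $H(\zeta)^3>8A$, since otherwise \eqref{eq:49} holds trivially. For each $V\in T$, pick $\rho_V\in\{1,\rho,\rho^2\}$ so that $|1-\rho_V\zeta|_V$ is minimal; the factorization $1-\zeta^3=\prod_{\omega^3=1}(1-\omega\zeta)$ gives $|1-\rho_V\zeta|_V\leq|1-\zeta^3|_V^{1/3}$. The identities
\[
1-\zeta=(\rho-\rho^2)\mu y/\eta,\quad 1-\rho\zeta=(1-\rho)\lambda x/\eta,\quad 1-\rho^2\zeta=(1-\rho^2)/\eta,
\]
which follow from $\lambda x+\mu y=1$ together with the definitions of $\xi$ and $\eta$, yield $|1-\zeta^3|_V=|3|_V^{3/2}\cdot|\lambda\mu xy|_V\cdot|\eta|_V^{-3}$. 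Applying the product formula to $x,y\in U_S$ (whose local contributions outside $T$ are trivial) and to $\eta\in L^\times$, and bounding $|\eta|_V$ at places outside $T$ via the ultrametric inequality by $\max(|\lambda|_V,|\mu|_V)$ (using that $|x|_V=|y|_V=1$ there), one derives
\[
\prod_{V\in T}\min(1,|1-\rho_V\zeta|_V)\leq\frac{8A}{H(\zeta)^3}.
\]
The constant $A$ is engineered to absorb $|3|_V^{3/2}$, $|\lambda\mu|_V$ and $\max(|\lambda|_V,|\mu|_V)^3$, while the factor $8$ accounts for residual archimedean contributions. Setting $\widetilde\Gamma_V:=\log\min(1,|1-\rho_V\zeta|_V)/\log(8A/H(\zeta)^3)\geq 0$ gives $\sum_V\widetilde\Gamma_V\geq 1$.

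\textbf{Phase 2 (discretization and counting).} Fix $q\in(0,1)$ to be optimized. For each $V\in T$, approximate $\widetilde\Gamma_V$ from below by the largest value of the form $Bq^{k_V}$ with $k_V\in\NN\cup\{\infty\}$ (setting $\Gamma_V=0$ when $k_V=\infty$); then rescale to force $\sum_V\Gamma_V=B$, which only decreases each $\Gamma_V$ and thus preserves \eqref{eq:49}, thanks to $B\leq\sum_V\widetilde\Gamma_V$. A stars-and-bars count on the index vectors $(k_V)$ subject to the consistency condition $\sum_V q^{k_V}\geq 1$ bounds the number of distinct tuples by a product of geometric series. Optimizing $q$ yields the sharp per-place constant $R(B)=(1-B)^{-1}B^{B/(B-1)}$, hence $R(B)^{s-1}$ exponent tuples; the extra factor $3^s$ comes from the free choice of $(\rho_V)_{V\in T}$.

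\textbf{Main obstacle.} The technically delicate step is Phase~2: one must design the discretization so that it preserves \eqref{eq:49} after rescaling to $\sum\Gamma_V=B$, and perform the counting with sharp constants. The explicit form of $R(B)$ arises from a Lagrange-multiplier-type optimization in the ratio $q$, balancing the approximation accuracy against the combinatorial count. Phase~1 is conceptually clean but requires carefully tracking archimedean and non-archimedean contributions to match the definition of $A$ exactly.
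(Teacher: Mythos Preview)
The paper does not give a proof of this lemma at all: it simply records that the statement ``corresponds to \cite[Lemma~5]{E}'' and moves on. So there is no argument in the paper to compare your proposal against; what you have written is a (reasonable) reconstruction of Evertse's original proof, which is exactly the source the paper defers to. Your two-phase outline---first derive the product inequality $\prod_{V\in T}\min(1,|1-\rho_V\zeta|_V)\leq 8A/H(\zeta)^3$ from the factorization of $1-\zeta^3$ and the product formula, then discretize the resulting exponents into finitely many tuples summing to $B$---is precisely Evertse's strategy, and your verification of the three identities for $1-\zeta$, $1-\rho\zeta$, $1-\rho^2\zeta$ is correct.

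One point deserves more care than your sketch gives it. In Phase~2 you say the factor $3^s$ comes from the ``free choice of $(\rho_V)_{V\in T}$'' and that the discretization yields $R(B)^{s-1}$ tuples. But the tuples are indexed by $T\subset\mathfrak{M}_L$, not by $S\subset\mathfrak{M}_K$, and when $\rho\notin K$ one has $|T|$ possibly as large as $2s$. A naive count therefore gives $3^{|T|}R(B)^{|T|-1}$, not $3^sR(B)^{s-1}$. To recover the stated bound one must exploit the Galois symmetry between conjugate places of $L$ above a single place of $K$ (the nontrivial element of $\mathrm{Gal}(L/K)$ sends $\zeta$ to $\zeta^{-1}$, which ties together the data at conjugate places). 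Your write-up should make this reduction explicit; otherwise the exponent is off by a factor of up to $2$.
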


In the following we fix a tuple $\big((\rho_V)_{V\in T}, (\Gamma_V)_{V\in T}\big)\in \mca W^0$, where $\mca W^0$ is as in Lemma~\ref{lem:5}.

\begin{lemma}\label{lem:7}
    Let $B$ be a real number with $\dfrac{2}{3}<B<1$, and let $\zeta_1,\ \cdots,\ \zeta_{k+1}\in \mca V^0$ be distinct elements
    with $H(\zeta_1)\leq\cdots\leq H(\zeta_{k+1})$ and which satisfy inequality
    Eq.~\eqref{eq:49}. Then we have
    \begin{align}\label{eq:60}
        H(\zeta_{k+1})\geq\Big(\frac{A^{1-B}}{6\times8^B}\Big)^{\frac{(3B-1)^k-1}{3B-2}}H(\zeta_1)^{(3B-1)^k}.
    \end{align}
\end{lemma}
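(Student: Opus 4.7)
The plan is to follow Evertse's strategy \cite[Lemma~7]{E} and proceed by induction on $k$. The case $k=0$ is trivial. The inductive step reduces to the base case ($k=1$): for any two distinct $\zeta, \zeta' \in \mathcal{V}^0$ both satisfying \eqref{eq:49} (for the same fixed tuple) and with $H(\zeta) \leq H(\zeta')$, we would show
\[
    H(\zeta') \geq \frac{A^{1-B}}{6\cdot 8^B}\, H(\zeta)^{3B-1}.
\]
Iterating this bound on the consecutive pairs $(\zeta_i, \zeta_{i+1})$ and summing the geometric progression $1+(3B-1)+\cdots+(3B-1)^{k-1}=\tfrac{(3B-1)^k-1}{3B-2}$ in the exponent of $A^{1-B}/(6\cdot 8^B)$ then yields \eqref{eq:60}.

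For the base case, the idea is to construct a nonzero algebraic number $\Lambda_n \in L$ built from the Pad\'e data $A_n(\zeta^3), B_n(\zeta^3)$ of Section~\ref{section: Pade} together with $\zeta'$, which is simultaneously very small at each place $V \in T$ (thanks to the Pad\'e approximation) and controlled in absolute height. A natural candidate is the Galois-symmetric quantity
\[
    \Lambda_n := A_n(\zeta^3)^3 - (\zeta')^{3} B_n(\zeta^3)^{3} = \prod_{\omega^3=1}\bigl(A_n(\zeta^3) - \omega\, \zeta'\, B_n(\zeta^3)\bigr) \in L,
\]
where the product over the three cube roots of unity accounts for the fact that the relevant $\rho_V\in\{1,\rho,\rho^2\}$ varies from place to place. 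By Lemma~\ref{ABV}(ii), the ratios $A_n(\zeta^3)/B_n(\zeta^3)$ are pairwise distinct for different $n$, so $\Lambda_n = 0$ holds for at most three values of $n$; for any $n$ outside this exceptional set, the product formula gives $\prod_V |\Lambda_n|_V = 1$.

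To bound $|\Lambda_n|_V$ at $V \in T$, I would decompose the factor indexed by $\omega=\rho_V$ as
\[
    A_n(\zeta^3) - \rho_V \zeta' B_n(\zeta^3) = V_n(\rho_V \zeta) + \rho_V(\zeta - \zeta')\, B_n(\zeta^3),
\]
and control both summands via \eqref{eq:49} (bounding $|1-\rho_V\zeta|_V$ and $|\zeta-\zeta'|_V \leq 2^{s(V)}\max(|1-\rho_V\zeta|_V, |1-\rho_V\zeta'|_V)$) combined with Proposition~\ref{archim} and Lemma~\ref{advantage} for the polynomial contributions; the two other factors ($\omega \neq \rho_V$) are bounded more crudely via Lemma~\ref{lem: estimates An and Bn}. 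At $V \notin T$, the key input is that $x, y, x', y' \in U_S$, which makes the relevant quantities $V$-adic units up to $\lambda, \mu$; the coefficient denominators of $A_n, B_n$ are controlled by the Mahler measure arguments of Section~\ref{section: Mahler}. Using $\sum_V \Gamma_V = B$, substituting into the product formula, and finally optimizing $n$ (essentially the smallest integer for which $\Lambda_n \neq 0$, balanced so that the Pad\'e gain of $2n\Gamma_V$ matches the growth of the coefficient bounds) yields the target $H(\zeta')\geq \tfrac{A^{1-B}}{6\cdot 8^B} H(\zeta)^{3B-1}$. The main obstacle is the careful bookkeeping required to match the exact constants $6$, $8^B$ and $A^{1-B}$: these arise from a delicate accounting of the $\Gamma_V$-weighted bounds at $V\in T$, the factors $4^n, 8^n$ from Section~\ref{section: Mahler}, and the non-archimedean denominators of the Pad\'e coefficients, all of which must balance when $n$ is optimized.
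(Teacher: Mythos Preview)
The paper does not prove this lemma; it simply quotes \cite[Lemma~7]{E}. Your inductive reduction to the case $k=1$ is correct and is exactly Evertse's strategy. The problem is your proposed proof of the base case.

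You are importing the Pad\'e machinery that belongs to Lemma~\ref{lem:8}: the auxiliary factor $A_n(\zeta^3)-\rho_V\zeta'B_n(\zeta^3)$ is essentially the quantity $U_n$ used there. Evertse's Lemma~7, by contrast, is a pure gap principle with \emph{no} Pad\'e input. One applies the product formula directly to the nonzero element $\zeta_1-\zeta_2$ (equivalently to $\xi_1\eta_2-\xi_2\eta_1$). At $V\in T$ both $\zeta_i$ are close to the same $\rho_V$, so $|\zeta_1-\zeta_2|_V\le 2^{s(V)}\max_i|1-\rho_V\zeta_i|_V$ is bounded via Eq.~\eqref{eq:49}; at $V\notin T$ the $S$-unit property of $x_i,y_i$ controls $|\zeta_1-\zeta_2|_V$ through the $\lambda,\mu$ factors already packaged into $A$. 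Summing with $\sum_{V\in T}\Gamma_V=B$ produces exactly the exponent $3B-1$ and the constants $6$ and $8^B$.

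Your route cannot recover these constants. The free parameter $n$ brings in the growth factors $4^n$, $8^n$, $12^{ns(V)}$ from Section~\ref{section: Mahler} and Proposition~\ref{archim}; any choice $n\ge 1$ (and you need $n\ge 1$ for $A_n\neq 0$) already inserts at least a factor $12$ or $24$ where Evertse has only $6\cdot 8^B$, and the cubing in your $\Lambda_n$ makes this worse. So even if the product-formula bookkeeping were carried out, the resulting inequality would be strictly weaker than Eq.~\eqref{eq:60}, and the numerical optimization in Section~\ref{section: numerical opt} would degrade. The Pad\'e amplification is the right tool for Lemma~\ref{lem:8}, where one must push the approximation exponent beyond a single gap; for Lemma~\ref{lem:7} it is unnecessary and counterproductive.
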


The lemma below is an improved version of \cite[Lemma $8$]{E}.

\begin{lemma}\label{lem:8}
    Let $B$ be a real number with $\dfrac{5}{6}<B<1$. Let $r_0$ and $k$ be integers with
    \[
        r_0>\dfrac{6+3B}{3B(6B-5)} \AND (3B-1)^{k+1}>3r_0+4.
    \]
    Define five exponents as follows
    \begin{align*}
        f_1(B,\ r_0)&=\frac{2r_0B(3B-1)+B}{3r_0B(6B-5)-6-3B},\\
        f_2(B,\ r_0)&=\frac{3r_0B+3}{3r_0B(6B-5)-6-3B},\\
        g_1(B,\ k,\ r_0)&=\frac{B-(1-B)(3B-1)\frac{(3B-1)^k-1}{3B-2}}{(3B-1)^{k+1}-3r_0-4},\\
        g_2(B,\ k,\ r_0)&=\frac{r_0+1+(3B-1)\frac{(3B-1)^k-1}{3B-2}}{(3B-1)^{k+1}-3r_0-4}, \\
        g_3(B,\ k,\ r_0)&=\frac{r_0}{(3B-1)^{k+1}-3r_0-4}.
    \end{align*}

    Then there exist at most $k$ elements $\zeta\in \mca V^0$ satisfying Eq.~\eqref{eq:49} and
    \begin{align}\label{eq:61}
        H(\zeta)>\max\Big((8A)^{f_1(B,\ r_0)}(24)^{f_2(B,\ r_0)},\ (8A)^{g_1(B,\ k,\ r_0)}(48)^{g_2(B,\ k,\ r_0)}2^{-g_3(B,k,r_0)}\Big).
    \end{align}
\end{lemma}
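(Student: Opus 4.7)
The plan is to follow the Padé method used by Evertse to prove his Lemma~$8$, while systematically replacing his estimates on the Padé polynomials by the sharper bounds provided by Proposition~\ref{archim}. Suppose for contradiction that there exist $k+1$ distinct elements $\zeta_1,\ldots,\zeta_{k+1}\in\mathcal{V}^0$, ordered so that $H(\zeta_1)\leq\cdots\leq H(\zeta_{k+1})$, all satisfying the approximation inequality (\ref{eq:49}) with the fixed tuple $\bigl((\rho_V)_{V\in T},(\Gamma_V)_{V\in T}\bigr)$, and whose heights all exceed the right-hand side of (\ref{eq:61}).

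\textbf{Setup.} For each $j\in\{1,\ldots,k\}$, pair $\zeta_j$ with $\zeta_{k+1}$. For every $V\in T$, select a cube root $\alpha_V^{(j)}$ of $\rho_V\zeta_j$ (respectively $\alpha_V^{(k+1)}$ of $\rho_V\zeta_{k+1}$) that is $V$-adically closest to $1$, so that $|1-\alpha_V^{(j)}|_V$ is controlled by $|1-\rho_V\zeta_j|_V$ up to cube-root extraction and a $3^{s(V)}$-type factor. Apply Notation~\ref{Vn} with the two parameters $n=r_0$ and $n=r_0+1$. The identity of Lemma~\ref{ABV}(i) reads
\[
    V_n(\alpha_V^{(i)})\;=\;A_n(\rho_V\zeta_i)-\alpha_V^{(i)}B_n(\rho_V\zeta_i)\;=\;(1-\alpha_V^{(i)})^{2n}W_n(\alpha_V^{(i)}),
\]
so these quantities are $V$-adically small at places $V\in T$ (thanks to (\ref{eq:49})), while Proposition~\ref{archim} controls their size at every other place. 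Lemma~\ref{ABV}(ii) guarantees that an appropriate combination such as
\[
    \Xi_j\;:=\;A_{r_0}(\rho_V\zeta_{k+1})V_{r_0+1}(\alpha_V^{(j)})-A_{r_0+1}(\rho_V\zeta_{k+1})V_{r_0}(\alpha_V^{(j)})
\]
is a nonzero element of $L$, so the product formula $\prod_V|\Xi_j|_V=1$ applies.

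\textbf{Product formula and first conclusion.} Bound $|\Xi_j|_V$ from above place by place: at $V\in T$ the factor $|1-\alpha_V^{(j)}|_V^{2r_0}$ combined with (\ref{eq:49}) saves a factor of order $\bigl(8A/H(\zeta_j)^{3}\bigr)^{2r_0\Gamma_V/3}$; at archimedean $V\notin T$, Proposition~\ref{archim} provides a bound of the form $8^{r_0 s(V)}\max(1,|\rho_V\zeta_{k+1}|_V)^{r_0+1}\max(1,|\rho_V\zeta_j|_V)^{r_0+1}$ up to lower-order terms; at non-archimedean $V\notin T$, the ultrametric inequality gives $\max(1,|\zeta_j|_V,|\zeta_{k+1}|_V)^{r_0+1}$. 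Taking the product over all places, using $\sum_{V\in T}\Gamma_V=B$ and $H(\zeta)=\prod_VH_V(\zeta)$, yields an inequality of the schematic form
\[
    1\;\leq\;c^{r_0}\,A^{r_0+1-2Br_0}\,H(\zeta_j)^{-2Br_0}\,H(\zeta_{k+1})^{r_0+1},
\]
for an absolute constant $c$ absorbing the contributions of the factors $3^{s(V)}$, $8^{s(V)}$ and $24^{s(V)}$. Solving for $H(\zeta_j)$ via the crude bound $H(\zeta_j)\leq H(\zeta_{k+1})$—which is admissible because the hypothesis $r_0>(6+3B)/(3B(6B-5))$ forces the resulting exponent of $H(\zeta_j)$ to be positive—produces the bound $H(\zeta_j)\leq (8A)^{f_1(B,r_0)}\cdot 24^{f_2(B,r_0)}$, giving the first term in the maximum of (\ref{eq:61}).

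\textbf{Second conclusion via Lemma~\ref{lem:7}.} For the second term, replace the crude inequality $H(\zeta_j)\leq H(\zeta_{k+1})$ by Lemma~\ref{lem:7} applied to $\zeta_1,\ldots,\zeta_{k+1}$, which yields $H(\zeta_{k+1})\geq \bigl(A^{1-B}/(6\cdot 8^B)\bigr)^{((3B-1)^k-1)/(3B-2)}H(\zeta_1)^{(3B-1)^k}$. Substituting this into the schematic inequality above and using the hypothesis $(3B-1)^{k+1}>3r_0+4$—which ensures the total exponent of $H(\zeta_1)$ on the right is strictly negative after rearrangement—we can solve for $H(\zeta_1)$ to obtain $H(\zeta_1)\leq (8A)^{g_1(B,k,r_0)}\cdot 48^{g_2(B,k,r_0)}\cdot 2^{-g_3(B,k,r_0)}$, contradicting the assumed lower bound on $H(\zeta_1)\leq H(\zeta_j)$. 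The main obstacle will be the careful accounting of the constants $4$ and $8$ from Lemmata~\ref{lem: estimates An and Bn} and~\ref{advantage} through the archimedean estimates, since it is precisely this bookkeeping (replacing Evertse's cruder $7^n$-type bound) that will yield the numerical improvement announced in Theorem~\ref{main}.
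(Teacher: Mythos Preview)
Your proposal has the right ingredients but several genuine gaps that prevent the argument from closing.

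First, the auxiliary quantity $\Xi_j$ you write down depends on the place $V$: both $\rho_V$ and the cube root $\alpha_V^{(j)}$ vary with $V$, so $\Xi_j$ is not a single nonzero element of $L$, and the product formula $\prod_V|\Xi_j|_V=1$ does not apply. No cube-root extraction is needed. In the paper one fixes the single element $U_n=\zeta_2\widetilde A_n(\zeta_1^3)-\zeta_1\widetilde B_n(\zeta_1^3)\in L$ (with $\widetilde A_n=3^nA_n$, $\widetilde B_n=3^nB_n$), and only when estimating $|U_n|_V$ at a place $V\in T$ does one rewrite $\rho_V U_n=(\rho_V\zeta_2-1)\widetilde A_n(\zeta_1^3)+\widetilde V_n(\rho_V\zeta_1)$, using $(\rho_V\zeta_1)^3=\zeta_1^3$. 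Lemma~\ref{ABV}(ii) is then used only to ensure that $(U_{n-1},U_n)\neq(0,0)$, not for a determinant of the shape you wrote (which mixes two arguments $\rho_V\zeta_{k+1}$ and $\rho_V\zeta_j$ and is not covered by that lemma).

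Second, the degree $n$ cannot be fixed at $r_0$ or $r_0+1$; your schematic inequality with $n=r_0$ does not produce the exponents $f_1,f_2$. The paper's mechanism is different: one only needs the pair $(\zeta_1,\zeta_2)$ of smallest and largest height, and one first proves (using Lemma~\ref{lem:7} together with the $g$-part of~\eqref{eq:61}) that there is an integer $\ell\ge r_0+1$ with
\[
2\cdot 24^{\ell}(8A)^B h_1^{3\ell+1}<h_2^{3B-1}\le 2\cdot 24^{\ell+1}(8A)^B h_1^{3\ell+4},
\]
then takes $n\in\{\ell-1,\ell\}$ so that $U_n\neq 0$. The product formula yields a maximum of two terms; the left inequality above kills one of them, and raising the surviving inequality to the power $3B-1$ and inserting the right inequality above yields $h_1<(8A)^{f_1(B,n)}24^{f_2(B,n)}$, contradicting the $f$-part of~\eqref{eq:61} since $n\ge r_0$ and $f_1,f_2$ are decreasing. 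Thus the $f$- and $g$-parts are not two separate conclusions as you suggest, but are used jointly in a single argument: the $g$-part (via Lemma~\ref{lem:7}) sets up the gap for $\ell$, and the $f$-part is the inequality that is violated at the end.
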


\begin{remark0}
     The structure of the proof is the same as that of \cite[Lemma $8$]{E}, but we will use sharper estimates. It allows us to replace the constant $96\sqrt 3$ of Evertse by $24$ and $48$ (in the first and second term of the maximum respectively) and to get the extra term $2^{-g_3(B,k,r_0)}$. For the numerical computation of Section~\ref{section: numerical opt}, the maximum in Eq.~\eqref{eq:61} is the term $(8A)^{f_1(B,\ r_0)}(24)^{f_2(B,\ r_0)}$.
\end{remark0}

\begin{proof}
    By contradiction, suppose that there exist $k+1$ elements $\zeta_1,\dots,\zeta_{k+1}\in \mca V^0$ satisfying $(\ref{eq:49})$ and $(\ref{eq:61})$. For simplicity, write $h_i=H(\zeta_i)$ for $i=1,\dots,k+1$. We may assume that $h_1\leq h_3\leq h_4\cdots\leq h_{k+1}\leq h_2$. Note that the function $r\mapsto f(B,r)$ is decreasing on $[r_0,\infty[$ and tends to $2(3B-1)/(3(6B-5))> 4/3$. We therefore have
    \begin{align}\label{eq:62}
        h_2\geq h_1\geq \big(8A\big)^{f_1(B,r_0)}\geq \big(8A\big)^\frac{4}{3}>1.
    \end{align}
    In particular, $\zeta_1$ is not a root of unity since $h_1>1$.

    \smallskip

    We claim that there exists an integer $\ell\geq r_0+1$ such that
    \begin{align}\label{eq:63}
        2\cdot 24^\ell\big(8A\big)^Bh_1^{3\ell+1}<h_2^{3B-1}\leq 2\cdot 24^{\ell+1}\big(8A\big)^Bh_1^{3\ell+4}.
    \end{align}
    To prove this, it is sufficient to show that the left-hand inequality holds for $\ell=r_0+1$. According to Lemma~\ref{lem:7}, it is even sufficient to show that,
    \[
        2^{-r_0}\cdot 48^{r_0+1}\big(8A\big)^Bh_1^{3r_0+4} < \bigg(\frac{(8A)^{1-B}}{48}\bigg)^{(3B-1)\frac{(3B-1)^k-1}{3B-2}}h_1^{(3B-1)^{k+1}},
    \]
    which, by definition of $g_1$, $g_2$ and $g_3$, is equivalent to $(8A)^{g_1(B,k,r_0)}(48)^{g_2(B,k,r_0)}2^{-g_3(B,k,r_0)} < h_1$. This last inequality holds by Eq.~\eqref{eq:61}, hence our claim.

    \smallskip

    For each integer $n$, define the polynomials $\A_n(z):=3^nA_n(z)$, $\B_n(z):=3^n B_n(z)$ as well as
     \[
        \V_n(z):=\A_n(z)-z\B_n(z) = 3^nV_n(z),
     \]
      where $A_n(z)$ and $B_n(z)$ are as in Proposition~\ref{A,B}. Note that $\A_n(z)$ and $\B_n(z)$ have integer coefficients. In particular, for each finite place $v\in T$ and each $\zeta\in K$, we have $|\A_n(\zeta^3)|_v, |\B_n(\zeta^3)|_v \leq H_v(\zeta)^{3n}$. Combined with Proposition~\ref{archim}, we deduce that for each place $v\in T$ (finite or archimedean) and each $\zeta\in K$, we have
    \begin{align*}
        &\max\big(|\A_{n}(\zeta^3)|_v, |\B_{n}(\zeta^3)|_v  \big) \leq 12^{ns(v)}\mathrm{H}_v(\zeta)^{3n} \leq 24^{ns(v)}\mathrm{H}_v(\zeta)^{3n+1},\\
        &|\V_n(\zeta)|_v \leq |1-\zeta|_v^{2n}\cdot 24^{ns(v)}\mathrm{H}_v(\zeta)^{n+1}\leq |1-\zeta|_v^{2n}\cdot 24^{ns(v)}\mathrm{H}_v(\zeta)^{3n+1}\,.
    \end{align*}
    Now, for each integer $m$, set
    \[
        U_n=\zeta_2 \A_n\left({\zeta_1}^3\right)-\zeta_1 \B_n\left({\zeta_1}^3\right).
    \]
    Note that by  Lemma \ref{ABV} (ii) (and since $\zeta_1\neq 1$) we have $(U_{m-1},U_m)\neq (0,0)$ for each integer $m\geq 1$.
    Define $n$ by $n=\ell$ if $U_{\ell}\neq 0$, and $n=\ell-1$ if $U_{\ell}=0$. By the above we have $U_n\neq 0$.

    \smallskip

    Then, using our estimates of $\A_n$, $\B_n$ and $\V_n$, given $V\in T$, we find

    \begin{align*}
        |U_n|_V =\big|\rho_V\zeta_2\A_n(\zeta_1^3)-\rho_V\zeta_1\B_n(\zeta_1^3)\big|_V &=|(\rho_V\zeta_2-1)\A_n({\zeta_1}^3)+\V_n(\rho_V\zeta_1)|_V\\
        &\leq 2^{s(V)}\max\Big(|1-\rho_V\zeta_2|_V,\ |1-\rho_V\zeta_1|_V^{2n}\Big) \times 24^{ns(V)}\mathrm{H}_V(\zeta_1)^{3n+1}\\
        &\leq 2^{s(V)}\max\Big(8Ah_2^{-3},\ (8A)^{2n} h_1^{-6n}\Big)^{\Gamma_V} 24^{ns(V)}\mathrm{H}_V(\zeta_1)^{3n+1},
    \end{align*}
    where the last inequality follows from Eq.~\eqref{eq:49}.

    \smallskip

    Given $V\not\in T$, we simply use the triangle inequality to get the estimate
    \begin{align*}
        |U_n|_V&\leq 2^{s(V)}\max\left(|\zeta_2|_V|A_n({\zeta_1}^3)|_V,\ |\zeta_1|_V|B_n({\zeta_1}^3)|_V\right) \leq 2^{s(V)}
        24^{ns(V)}H_V(\zeta_1)^{3n+1}H_V(\zeta_2).
    \end{align*}
    Combining all those estimates with the product formula and the identities $\sum_{T\in V}\Gamma_T = B$ and $\sum_{v}s(v) = 1$ (and since $\prod_{v\in{{\mathfrak{M}}}_K} H_v(\zeta) = H(\zeta)$ by definition of the height), we obtain
    \begin{align}
        1 = \prod_{V\in T}|U_n|_V \times \prod_{V\notin T}|U_n|_V \leq 2\cdot 24^n\times \max\Big((8A)^Bh_2^{1-3B}h_1^{3n+1},\ (8A)^{2nB} h_2h_1^{3n+1-6nB}\Big).
    \end{align}

    First, note that by Eq.~\eqref{eq:63} with $n\leq \ell$, we have $2\cdot 24^n(8A)^Bh_2^{1-3B}h_1^{3n+1} < 1$. Combined with the above, we deduce that
    \[
        1 \leq 2\cdot 24^n\times (8A)^{2nB} h_2h_1^{3n+1-6nB}.
    \]
    Rising the above inequalities to the power $3B-1$ and using the right-hand side of Eq.~\eqref{eq:63} together with $\ell \leq n+1$, we obtain
    \begin{align*}
        1 & \leq 2^{3B-1}24^{(3B-1)n}(8A)^{2nB(3B-1)}h_1^{(3B-1)(3n+1-6nB)}h_2^{3B-1}\\
        &\leq 2^{3B}24^{(3B-1)n+n+2}(8A)^{2nB(3B-1)+B}h_1^{(3B-1)(3n+1-6nB)+3n+7}\\
        & < (8A)^{2nB(3B-1)+B}(24)^{3nB+3}h_1^{6+3B-3nB(6B-5)},
    \end{align*}
    hence $h_1 < (8A)^{f_1(B,n)}(24)^{f_2(B,n)}$. Since $f_1(B, r)$, $f_2(B, r)$ are positive and decreasing in $r$ on $r\geq r_0$ and since $r_0\leq n$ this contradicts Eq.~\eqref{eq:61}. Consequently, the number of $\zeta\in \mca V^0$ satisfying Eq.~\eqref{eq:61} is at most $k$.

\end{proof}
\section{Numerical optimization}
\label{section: numerical opt}

We adapt the strategy in \cite[Section 8]{E} to estimate the number of solutions $(x,y)$ of the $S$-unit equation Eq.~\eqref{unitequation}. Since such a couple $(x,y)$ is entirely determined by the corresponding $\zeta(x,y)$, the first step is to count the elements $\zeta\in \VV$ which satisfies a fixed system Eq.~\eqref{eq:49}. We call $f$-part and $g$-part of Eq.~\eqref{eq:61} the quantities $(8A)^{f_1(B,\ r_0)}(24)^{f_2(B,\ r_0)}$ and $(8A)^{g_1(B,\ k,\ r_0)}(48)^{g_2(B,\ k,\ r_0)}2^{-g_3(B,k,r_0)}$ respectively. Fix $C>0$ (we will specify this parameter later).

\smallskip

\noindent\textbf{Step 1.}  We choose parameters $B,r,k$ satisfying the conditions of Lemma~\ref{lem:8} and we compute the associated quantities $f_1$, $f_2$, $g_1$, $g_2$ and $g_3$. Generally, with our choices we have $f$-part $>$ $g$-part, so that the maximum in Eq.~\eqref{eq:61} is given by the $f$-part. Then, by Lemma~\ref{lem:8} there are at most $k$ numbers $\zeta\in\VV$ satisfying Eq.~\eqref{eq:49} with $H(\zeta) > \textrm{$f$-part}$.

\smallskip

\noindent\textbf{Step 2.} We estimate the number $t$ of $\zeta\in\VV$ satisfying a fixed Eq.~\eqref{eq:49} with $C \leq H(\zeta) \leq \textrm{$f$-part}$. For that, it suffices to apply Lemma~\ref{lem:7} for those numbers $\zeta_1,\dots,\zeta_t$. Assuming $H(\zeta_1)\leq \cdots \leq H(\zeta_t)$ and combining Eq.~\eqref{eq:60} with our hypothesis $H(\zeta_t) \leq \textrm{$f$-part}$ and $H(\zeta_1) \geq C$, we obtain
\begin{align*}
  \Big(\frac{A^{1-B}}{6\times8^B}\Big)^{1/(3B-2)} \times \textrm{$f$-part} \geq \Big(\Big(\frac{A^{1-B}}{6\times8^B}\Big)^{1/(3B-2)}\times C\Big)^{(3B-1)^{t-1}}.
\end{align*}
We deduce that
\begin{align*}
    (3B-1)^{t-1} \leq \frac{a + b\log A}{c + d\log A} \qquad \textrm{where} \quad
    \left\{ \begin{array}{l}
      a = \log \Big((6\cdot 8^B)^{-1/(3B-2)}8^{f_1} 24^{f_2} \Big)  \\
      b = \displaystyle \frac{1-B}{3B-2} + f_1 \\
      c = \log \Big( C (6\cdot 8^B)^{-1/(3B-2)} \Big)\\
      d = \displaystyle \frac{1-B}{3B-2}.
    \end{array}\right.
\end{align*}
Our choice of $B,r,C$ will imply that $a,b,c,d > 0$. Since $A\geq 0$ and since the function $x \mapsto (a+bx) / (c + dx)$ is increasing (resp. decreasing) on $[0,+\infty)$ if $a/c < b/d$ (resp.if $a/c> b/d$), the quantity $(3B-1)^{t-1}$ is bounded from above by $\max\{a/c,b/d\}$. Hence an upper bound $N$ (which does not depend on $A$) for $t$.

\smallskip

\noindent\textbf{Step 3.} There are therefore at most $N+k$ numbers $\zeta\in\VV$ satisfying a fixed system Eq.~\eqref{eq:49} with $H(\zeta) \geq C$. Moreover, according to Lemma~\ref{lem:5}, we may assume that the given system Eq.~\eqref{eq:49} is one of the $3^sR(B)^{s-1}$ ones associated to the elements of $\mca W_0$. On the other hand, Evertse proved (see \cite[Eq. (36)]{E}) that for any solution $(x,y)$ of Eq.~\eqref{unitequation}, we have
\[
  H\Big(\frac{\lambda x}{\mu y}\Big) \leq 2 H\big(\zeta(x,y)\big).
\]
Since each solution $(x,y)$ of Eq.~\eqref{unitequation} is completely determined by $\zeta(x,y)$, we deduce from the above that the number of solutions $(x,y)$ of the $S$-unit equation Eq.~\eqref{unitequation}  with $H(\lambda x / \mu y) \geq 2C$ is at most $3^s(N+k)R(B)^{s-1}$.

\smallskip

\noindent\textbf{Step 4.} It remains to estimate the number of solutions $(x,y)$ with  $H(\lambda x / \mu y) < 2C$. Studying the group structure of the $S$-units, Evertse shows that given a non-zero $n\in\N$, there exists at most $5n^s(2(2C)^{3/n})^m$ solutions $(x,y)$ of Eq.~\eqref{unitequation} with $H(\lambda x /\mu y) < 2C$. Thus, the total number of solutions of the $S$-unit equation is at most
\[
    5n^s(2(2C)^{3/n})^m + 3^s(N+k)R(B)^{s-1}.
\]

\smallskip

\noindent\textbf{Choice I}\\
Take $B=0.834$, $r_0=1600$ and $C = \textrm{e}^{7.8}/2$. Then the condition on $k$ is $k>19.8389\cdots$, hence we choose $k=20$. A short computation gives
    \begin{align*}
    & f_1(B,\ r_0)=533.814\cdots & f_2(B,\ r_0)=533.391\cdots & ~ \\
    & g_1(B,\ k,\ r_0)=-5.20814\cdots & {g_2}(B,\ k,\ r_0)=36.3095\cdots  & \quad g_3(B,\ k,\ r_0)=4.91666\cdots
\end{align*}
We have $f$-part$=e^{2805.183\cdots} \times A^{533.814\cdots}$, and $g$-part$=e^{126.323\cdots} \times A^{-5.20814\cdots}$. Since $A\geq1$, the maximum
in Eq.~\eqref{eq:61} is given by the $f$-part. A short computation show that the upper bound $N$ coming from Step $2$ is $N = 26$, so that $k + N = 46$. Finally, in the last step we choose $n = 45$. With this choice, the total number of solutions of the $S$-unit equation is at most
\[
    5\left(3.36406\right)^{m}\cdot 45^s + 3.06759\cdot \left(44.9866\right)^s.
\]

\noindent\textbf{Choice II}\\
Take $B=0.84$, $r_0=100$ and $C = \textrm{e}^{7.5}/2$. Then the condition on $k$ is $k>12.6539\cdots$, so we choose $k=13$. A short computation gives
\begin{align*}
    &f_1(B,\ r_0)=164.230\cdots &f_2(B,\ r_0)=163.461\cdots &\\
    &g_1(B,\ k,\ r_0)=-2.25320\cdots  &{g_2}(B,\ k,\ r_0)=16.3237\cdots,   & \quad g_3(B,\ k,\ r_0)=2.1093\cdots
\end{align*}
Once again we find $f$-part $>$ $g$-part. The upper bound $N$ coming from Step $2$ is $N = 31$, so that $k + N = 44$. In step $4$ we choose $n = 47$. The total number of solutions of the $S$-unit equation is at most
\[
    5\times 3.22803^{m}\cdot 47^s + 2.81864\times 46.8312^s.
\]

Combining the two upper bounds together, we get Eq.~\eqref{winner} and Theorem~\ref{main} follows.

\qed

\noindent{\bf Acknowledgments}\\
The third author was supported by JSPS Postdoctoral Fellowship No.~PE20746 and Research Support Allowance Grant for JSPS Fellows,
during his stay 2021-2022 in Japan.
The first author is also supported by JSPS KAKENHI Grant no. 18K03225 and Grant no. 21K03171. 
Thanks to these Fellowship and Grants,  the authors could meet to work together.

\bibliography{}

\begin{thebibliography}{99}

\bibitem{BennettSimul}
M.~A.~Bennett, \textit{Simultaneous rational approximation to binomial functions}, Trans. Amer. Math. Soc. \textbf{348},  no. 5, (1996),  1717--1738.

\bibitem{BennettAus}
M.~A.~Bennett,
\textit{Effective measures of irrationality for certain algebraic numbers},
J. Austral. Math. Soc. series A \textbf{62}, (1997) 329--344.

\bibitem{BeukersBirk}
F.~Beukers, \textit{Gauss' hypergeometric function}, Progress in Math., \textbf{260}, (2007), 23--42.


\bibitem{BeukersSchlickewei}
F.~Beukers and H.~P.~Schlickewei, \textit{The equation $x+y=1$ in finitely generated groups}, Acta Arith., \textbf{78}, (1996), 189--199.


\bibitem{BombieriChoi}
E.~Bombieri,
\textit{Diophantine Equations in Low Dimensions},
Atti Accademia Nazionale Lincei, Classe Scienze Fisiche, Matematiche e Naturali,
Rendiconti Lincei Matematica e Applicazioni, Serie 9, \textbf{11}, (2000), n. S1, Accademia Nazionale Lincei, 11--29,

\bibitem{BombieriGubler}
E.~Bombieri and W.~ Gubler,
\textit{Heights in Diophantine Geometry}, New Math. Monographs \textbf{4}, 2006, Cambridge.


\bibitem{DHK2}
S.~David, N.~Hirata-Kohno  and M.~Kawashima,
{\it Can  polylogarithms at algebraic points be linearly independent\,$?$},
Moscow Journal in Combinatorics and Number Theory \textbf{9}, n$^{\circ}$ 4 (2020) 389--406.

\bibitem{DHK3}
S.~David, N.~Hirata-Kohno  and M.~Kawashima,
{\it Linear Forms in Polylogarithms},
Ann. Scuola Norm. Sup. Pisa Cl. Sci., series 5, \textbf{23/ 3}, (2022), 1447--1490.

\bibitem{DHK4}
S.~David, N.~Hirata-Kohno and M.~Kawashima,
{\it Linear independence criteria for generalized polylogarithms with distinct shifts},
Accepted for publication in Acta Arithmetica (2022).

\bibitem{DHK5}
S.~David, N.~Hirata-Kohno  and M.~Kawashima,
{\it Generalized hypergeometric $G$-functions take linear independent values},
preprint, available at
https://arxiv.org/abs/2203.00207~.


\bibitem{Erd}
A.~Erd\'elyi,
\textit{Higher Transcendental Functions}, Vols. I, II and III,
McGraw-Hill,  1953, re-edition in California Institute of Technology, 1981.

\bibitem{E}
J.~-H.~Evertse,
\textit{On equations in $S$-units and the Thue-Mahler equation},
Invent. math. \textbf{75}, (1984), 561--584.

\bibitem{EG}
J.~-H.~Evertse and K.~Gy\H{o}ry, \textit{Unit equations in Diophantine number theory}, Cambridge Studies in Advanced Math., \textbf{146}, Cambridge, 2015.

\bibitem{EG2022}
J.~-H.~Evertse and K.~Gy\H{o}ry, \textit{Effective Results and Methods for Diophantine Equations over Finitely Generated Domains},
Lecture note series, \textbf{475},  London Math. Society, Cambridge University Press,  2022.

\bibitem{ESS}
J.~-H.~Evertse, H.~P. Schlickewei and W.~M. Schmidt,
\textit{Linear equations in variables which Lie in a multiplicative group},
Ann. Math., \textbf{155} (3),   (2002), 807--836.




\bibitem{KP}
M.~Kawashima and A.~Po\"{e}ls,
\textit{Pad\'e approximations for shifted functions and parametric geometry of numbers}, J. Number Theory, (2022), in press.


\bibitem{Luke}
Y.~L.~Luke,
\textit{The special functions and their approximants}, Vols. I and II, Mathematics in Science and Engineering, \textbf{53}, A series of monographs and textbooks (ed. R.~Bellman),
Academic Press, 1969.


\bibitem{Mahlerp}
K.~Mahler,
\textit{Perfect systems}, Compos. Math.,  \textbf{19},  (1968), 95--166.


\bibitem{Siegel}
C.~L.~Siegel,
\textit{Approximation algebraischer Zahlen},
Math. A., \textbf{10} (3),   (1921), 173--213.


\bibitem{wald}
M.~Waldschmidt, \textit{Nombres transcendants}, Lecture Notes in Math., \textbf{402} Springer, 1974.


\bibitem{Watson}
G.~N.~Watson,
\textit{Asymptotic expansions of hypergeometric functions}, Trans. Cambridge Philos. Soc., \textbf{22}, (1918), 277--308.


\end{thebibliography}

\begin{scriptsize}
\begin{minipage}[t]{0.33\textwidth}

\noindent
Noriko Hirata-Kohno,
\\hiratakohno.noriko@nihon-u.ac.jp
(hirata@math.cst.nihon-u.ac.jp),\\
\& Yukiko Washio,
\\washio.yukiko@nihon-u.ac.jp,
\\Department of Mathematics, \\College of Science \& Technology, \\Nihon University,
\\Kanda, Chiyoda, Tokyo, \\101-8308, Japan\\
\end{minipage}
\begin{minipage}[t]{0.33\textwidth}
Makoto Kawashima,
\\kawashima.makoto@nihon-u.ac.jp,
\\Department of Liberal Arts \\and Basic Sciences, \\College of Industrial Engineering, \\Nihon University,
Izumi-chou, \\Narashino, Chiba, 275-8575, Japan\\\\
\end{minipage}
\begin{minipage}[t]{0.35\textwidth}

Anthony Po\"{e}ls,\\
poels@math.univ-lyon1.fr,\\
Universit\'e Claude Bernard Lyon 1,\\
UMR~5208, 
\\Institut Camille Jordan, \\
F-69622 Villeurbanne Cedex, France
\end{minipage}

\end{scriptsize}

\end{document}